\newcommand{\Aut}{\mathrm{Aut}}
\newcommand{\Cay}{\mathrm{Cay}}
\newcommand{\Sy}{\mathrm{S}}
\newtheorem{thm}{Theorem}[section]
\newtheorem{cor}[thm]{Corollary}
\newtheorem{pro}[thm]{Proposition}
\newtheorem{lem}[thm]{Lemma}
\begin{document}

\title[Isomorphisms of Cayley digraphs]
{On Isomorphisms of tetravalent Cayley digraphs over dihedral groups}%

\author{Jin-Hua Xie}
\address{Jin-Hua Xie, Center for Combinatorics, LPMC, Nankai University, Tianjin 300071, China}
\email{jinhuaxie@nankai.edu.cn (J.-H. Xie)}

\author{Zai Ping Lu}
\address{Zai Ping Lu, Center for Combinatorics, LPMC, Nankai University, Tianjin 300071, China}
\email{lu@nankai.edu.cn (Z.P. Lu)}

\author{Yan-Quan Feng}
\address{Yan-Quan Feng, School of Mathematics and Statistics, Beijing Jiaotong University, Beijing 100044, China}
\email{yqfeng@bjtu.edu.cn (Y.-Q. Feng)}
\begin{abstract}
Let $m$ be a positive integer. A group $G$ is said to be an $m$-DCI-group or an $m$-CI-group if $G$ has the $k$-DCI property or $k$-CI property for all positive integers $k$ at most $m$, respectively. Let $G$ be a dihedral group of order $2n$ with $n\geq 3$. Qu and Yu proved that $G$ is an $m$-DCI-group or $m$-CI-group, for every $m\in \{1,2,3\}$, if and only if $n$ is odd. In this paper, it is shown that $G$ is a $4$-DCI-group if and only if $n$ is odd and not divisible by $9$, and $G$ is a $4$-CI-group if and only if $n$ is odd.
\smallskip

\noindent\textsc{Key words:}~{Cayley digraph; $m$-DCI group; $m$-CI group; Dihedral group}

\vskip 5pt

\noindent\textsf{MSC2020:}~{20B25, 05C25}
\end{abstract}
\maketitle

\section{Introduction}\label{sect=Int}

In this paper, a (finite) digraph $\Gamma$ is an order pair $(V(\Gamma), Arc(\Gamma))$ of a finite set $V(\Gamma)$ and a set $Arc(\Gamma)$ consisting of ordered pairs of distinct elements from $V(\Gamma)$, while the elements in $V(\Gamma)$ and $Arc(\Gamma)$ are called vertices and arcs of $\Gamma$, respectively. Denote by $\Gamma^-(v)$ and $\Gamma^+(v)$, respectively, the sets of in-neighbors and out-neighbors of a vertex $v$ in a digraph $\Gamma$, that is,
\[
\Gamma^-(v)=\{u\in V(\Gamma)\mid (u,v)\in Arc(\Gamma)\},\, \Gamma^+(v)=\{u\in V(\Gamma)\mid (v,u)\in Arc(\Gamma)\}.
\]
We say a digraph $\Gamma$ is $k$-regular, or of valency $k$, if $|\Gamma^-(v)|=k=|\Gamma^+(v)|$ for all $v\in V(\Gamma)$. For a digraph or group $X$, denote by $\Aut(X)$ its automorphism group. Graphs are also involved in this paper, and it is convenient to treat a graph as a digraph with each edge $\{u,v\}$ equated with two arcs $(u,v)$ and $(v,u)$. Thus we define a graph as a digraph $\Gamma$ in which $(u,v)$ is an arc if and only if so does $(v,u)$; in this case, $\Gamma^-(v)=\Gamma^+(v)$,  written as $\Gamma(v)$.

Let $G$ be a finite group and let $S\subseteq G\setminus \{1\}$, where $1$ is the identity element of $G$. The \emph{Cayley digraph} of $G$ with respect to $S$, denoted by $\Cay(G,S)$, is the digraph having vertex set $G$ such that $(x,y)$ is an arc if and only if $yx^{-1}\in S$. Clearly, $\Cay(G,S)$ is $|S|$-regular. If $S=S^{-1}$, that is, $S$ is closed under taking inverse, then $(x,y)$ is an arc if and only if so does $(y,x)$. In this case, $(x,y)$ is an arc of $\Cay(G,S)$ if and only if so is $(y,x)$, so call $\Cay(G,S)$ a \emph{Cayley graph}.

For subsets $S,\,T\subseteq G\setminus\{1\}$, if $T=S^\sigma$ for some $\sigma\in \Aut(G)$ then $\sigma$ induces an isomorphism from
$\Cay(G,S)$ onto $\Cay(G,T)$, called a \emph{Cayley isomorphism}. A subset $S$ of $G\setminus\{1\}$ is called a \emph{CI-subset} of $G$ if for every $T\subseteq G$ with $\Cay(G,S)\cong \Cay(G,T)$ there exists a Cayley isomorphism between $\Cay(G,S)$ and $\Cay(G,T)$. In this case, $\Cay(G,S)$ is called a \emph{CI-digraph}, or a \emph{CI-graph} when $S=S^{-1}$. For a positive integer $m$, we say that $G$ have the \emph{$m$-DCI property} or \emph{$m$-CI property} if every Cayley digraph or every Cayley graph on $G$ of valency $m$ is a CI-digraph or a CI-graph, respectively. The group $G$ is said to be an \emph{$m$-DCI-group} or an \emph{$m$-CI-group} if $G$ has the $k$-DCI property or $k$-CI property for every positive integer $k\leq m$, respectively; in particular, when $m=|G|-1$, the group $G$ is called a \emph{DCI-group} or a \emph{CI-group}, respectively. Clearly, if $G$ has the $m$-DCI property then $G$ also has the $m$-CI property, and thus, a DCI-group is necessary a CI-group. We see from the definition that $S$ is a CI-subset of $G$ if and  only if so does $G\setminus (S\cup \{1\})$. Then the $m$-DCI property yields the $(|G|-1-m)$-DCI property, and so $G$ is a (D)CI-group if and only if $G$ is an $m$-(D)CI-group with $m=\lfloor{|G|-1\over 2}\rfloor$.

Begun with a conjecture proposed by \'{A}d\'{a}m~\cite{Ad} in 1967, finite $m$-(D)CI-groups have been extensively studied  for over fifty years, referred to surveys in \cite{Alspach-97,C.H.Li8,Palfy}. In current terminology, \'{A}d\'{a}m's conjecture suggests that
every cyclic group $\mathbb{Z}_n$ (with $n>1$) is a DCI-group. This was disproved in 1970 by Elspas and Turner \cite{Elspas}, who proved that $\mathbb{Z}_8$ has no the $3$-DCI property  and $\mathbb{Z}_{16}$ has no the $6$-CI property. However, at that time, they also confirmed the conjecture for a prime $n$. In the following years, the conjecture was proved for certain classes integers $n$, see \cite{Alspach,Babai,Godsil2}. Finally, the finite cyclic DCI-groups were classified by Muzychuk \cite{Mu1,Mu2}, that is, $\mathbb{Z}_n$ is a DCI-group if and only if $n=ab$ with $a\in \{1,2\}$ and $b$ a square-free integer, and $\mathbb{Z}_n$ is a CI-group but not a DCI-group if and only if $n\in\{8,9,18\}$. On the other hand, it has been proved that $\mathbb{Z}_n$ has the $m$-CI property for all positive integers $m\leq \min\{5,n\}$ (see \cite{CHLi,Sun,Toida}). In addition, Li~\cite{C.H.Li1} presented a necessary condition for cyclic groups with the $m$-DCI property, and conjectured that the condition is also sufficient, which has been confirmed by Dobson~\cite{Dobson}.

The first class of nonablian CI-groups, namely the dihedral groups of order twice a prime,  was given by Babai \cite{Babai}, who initiated the study on finite CI-groups other than cyclic groups. Later on, Babai and Frankl \cite{BF-1,BF-2} studied in depth the  CI-groups of odd order and insoluble   CI-groups. Li at al. extended the study of CI-groups and developed the theory of (D)CI-groups, see \cite{CL,C.H.Li7,CI-soluble,LP-2,LP-3,C.H.Li4,C.H.Li5}. It has been shown that a finite $m$-(D)CI-group has strict restrictions on the structure. In particular, the candidates of finite CI-groups and DCI-groups have been reduced to restricted lists, see \cite[Theorem 1.2]{LLP} and \cite[Corollary 1.5]{C.H.Li4}. Despite this, determining which finite groups are DCI-groups or CI-groups is still a highly challenging task. Besides the examples recorded in the references mentioned above, readers are referred \cite{DE1,Feng,KMP,Kov} for more DCI-groups or CI-groups. In this paper, we focus on the dihedral groups, and make an attempt toward to determining the dihedral DCI-groups and CI-groups.

For an integer $n\geq 2$, the dihedral group $\mathrm{D}_{2n}$ is generated by two elements with the following presentation:
\[
\mathrm{D}_{2n}=\langle a,b\mid a^n=b^2=1,\,abab=1 \rangle.
\]
As mentioned above, if $n$ is a prime then $\mathrm{D}_{2n}$ is a DCI-group. Recently, the authors of \cite{XFK} proved that if $\mathrm{D}_{2n}$ has the $m$-DCI property for some $1\leq m\leq n-1$, then $n$ is odd and not divisible by the square of any prime less than $m$, but in general it is unknown whether the converse is true. We also note that by \cite{Qu}, for $m\in \{1,2,3\}$, the dihedral group $\mathrm{D}_{2n}$  is an $m$-DCI-group if and only if $\mathrm{D}_{2n}$ is an $m$-CI-group if and only if $n$ is odd. In this paper, we investigate the isomorphic problem of tetravalent Cayley digraphs on $\mathrm{D}_{2n}$ and consider the $4$-DCI property and $4$-CI property of $\mathrm{D}_{2n}$. Our main result is stated in the following theorem.
\begin{thm}\label{mainth1}
Let $n\ge 3$ be an integer. Then
\begin{enumerate}[{\rm (i)}]
\item $\mathrm{D}_{2n}$ has the $4$-DCI property if and only if $n$ is odd and not divisible by $9$; and
\item $\mathrm{D}_{2n}$ has the $4$-CI property if and only if $n$ is odd.
\end{enumerate}
\end{thm}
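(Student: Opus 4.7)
The necessity parts follow directly from results cited in the introduction. Since the $4$-CI (resp.\ $4$-DCI) property implies the $3$-CI (resp.\ $3$-DCI) property, the ``$n$ odd'' clause in both (i) and (ii) follows from the Qu--Yu classification \cite{Qu}. The further restriction $9\nmid n$ in (i) is exactly the instance $m=4$ of the necessary condition proved in \cite{XFK}: the only prime $p<4$ with $p^{2}$ potentially dividing an odd $n$ is $p=3$. Thus only sufficiency in both parts, together with the construction of a $4$-DCI counterexample when $9\mid n$, remains.

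For sufficiency, assume $n$ is odd and let $\Gamma=\Cay(D_{2n},S)$ with $|S|=4$, additionally $S=S^{-1}$ in case (ii); the range $|S|\leq 3$ is covered by \cite{Qu}. The approach is the standard regular-subgroup criterion: $S$ is a CI-subset if and only if every regular subgroup of $A:=\Aut(\Gamma)$ isomorphic to $D_{2n}$ is conjugate in $A$ to the right regular copy $R(D_{2n})$. I would split into cases by $t:=|S\cap b\langle a\rangle|\in\{0,1,2,3,4\}$ and, for each value of $t$, describe the regular dihedral subgroups of $A$ and show that they all fuse.

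The case $t=0$ is the cleanest: $S\subset\langle a\rangle$ makes $\Gamma$ disconnected as two copies of $\Cay(\mathbb{Z}_{n},S)$. Because $n$ is odd, $\langle a\rangle$ is the unique index-$2$ subgroup of $D_{2n}$, so any $T$ with $\Cay(D_{2n},T)\cong\Gamma$ also lies in $\langle a\rangle$, and the problem reduces to the CI-ness of $S$ inside $\mathbb{Z}_{n}$. The cited results give that $\mathbb{Z}_{n}$ is $4$-CI for all $n$, and by \cite{Dobson} it is $4$-DCI precisely when $9\nmid n$; any Cayley isomorphism of $\mathbb{Z}_{n}$ extends to one of $D_{2n}$ by fixing $b$. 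This same case supplies the counterexample required for (i): when $9\mid n$, lift any non-CI $4$-subset of $\mathbb{Z}_{9}\hookrightarrow\mathbb{Z}_{n}$ (which exists by \cite{Dobson}, and is necessarily non-symmetric since $\mathbb{Z}_{n}$ is always $4$-CI) to $S\subset\langle a\rangle\subset D_{2n}$.

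The case $t=4$ forces $S$ to consist of involutions, so $S=S^{-1}$ and $\Gamma$ is a bipartite $4$-regular graph with parts $\langle a\rangle$ and $b\langle a\rangle$; writing $S=\{ba^{i}:i\in I\}$ for a $4$-subset $I\subset\mathbb{Z}_{n}$, the edges are $a^{j}\sim ba^{j+i}$, and the isomorphism problem is governed by the Schur ring on $\mathbb{Z}_{n}$ generated by $I-I$, on which the $4$-CI property of $\mathbb{Z}_{n}$ can be leveraged. The mixed cases $t\in\{1,2,3\}$ are where I expect the main difficulty: $\Gamma$ can be connected and non-bipartite, the group $A$ can be substantially larger than $R(D_{2n})$, and pinning down all its regular dihedral subgroups demands a delicate local analysis of the induced structure on out- and in-neighborhoods, the block systems preserved by $A$, and the action of the vertex stabilizer. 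For each subcase, the plan is to determine $A$ via these invariants and then either conclude by a structural counting argument or exhibit an explicit automorphism conjugating a given regular $D_{2n}$-subgroup to $R(D_{2n})$.
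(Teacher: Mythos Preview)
Your necessity argument contains a genuine error: having the $4$-CI property (every Cayley \emph{graph} of valency exactly $4$ is CI) does \emph{not} imply the $3$-CI property. The terminology distinguishes the ``$m$-CI property'' (valency exactly $m$) from being an ``$m$-CI-group'' (all valencies $\le m$); only the latter is downward closed. So in part~(ii) you cannot deduce ``$n$ odd'' from Qu--Yu. The paper instead supplies an explicit tetravalent counterexample for each even $n\ge 4$: with $S=\{a,a^{-1},ab,a^{n/2+1}b\}$ one computes $\Aut(\mathrm{D}_{2n},S)$, shows $R(\mathrm{D}_{2n})\trianglelefteq\Aut(\Gamma)$ for $n>4$, and then exhibits a second regular dihedral subgroup $\langle R(a^{2}b)\sigma_{-1,2},\,R(ab)\rangle$, so $S$ is not CI (Lemma~\ref{even}). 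For part~(i) your citation of \cite{XFK} is enough by itself (it gives both $n$ odd and $9\nmid n$ directly from the $4$-DCI property), so the faulty implication is harmless there.

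For sufficiency your plan diverges substantially from the paper and, as written, is only an outline that stops short of the real work. The paper does \emph{not} case-split on $t=|S\cap b\langle a\rangle|$. Its engine is Theorem~\ref{connected-dihedral}: every \emph{connected} tetravalent Cayley digraph on $\mathrm{D}_{2n}$ with $n$ odd is CI. This is proved by splitting on arc-transitivity. In the arc-transitive case one invokes the Kov\'acs--Kuzman--Malni\v c--Wilson classification of non-normal arc-transitive tetravalent dihedrants (Proposition~\ref{tetravalent-arc-transitive}) together with a direct CI verification for the exceptional family $\{a,a^{-1},a^{2}b,b\}$ (Lemma~\ref{odd}). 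In the non-arc-transitive case, if $\gcd(|A_{1}|,n)=1$ one is done by Lemma~\ref{p-CI}(ii); otherwise $3\mid |A_{1}|$ forces a length-$3$ orbit $S_{0}\subset S$, and the analysis branches on whether $\langle S_{0}\rangle$ is dihedral (Lemma~\ref{non-arctran-dihedral}, which uses the cubic arc-transitive classification of Proposition~\ref{cubic} and a block-system argument over the quotient $\Gamma_{\mathcal B}\cong\mathrm{C}_{m}$) or cyclic (a short reduction via the $3$-DCI property). Only after the connected case is settled does the paper handle disconnected $\Gamma$, and there the reduction is by homogeneity of $\mathrm{D}_{2n}$ (Lemma~\ref{homogeneous}): conjugate $\langle S\rangle$ and $\langle T\rangle$ into the same subgroup and apply either Proposition~\ref{4-cyclic} (if $\langle S\rangle$ is cyclic---this is where $9\nmid n$ enters for~(i)) or the connected result. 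Your $t=0$ case coincides with the cyclic branch of this last step, but the cases $t\in\{1,2,3,4\}$ mix connected and disconnected digraphs indiscriminately, and the ``Schur ring'' and ``local analysis'' suggestions you give there do not by themselves yield the control on $\Aut(\Gamma)$ that the paper obtains from the arc-transitive classifications.
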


By \cite{Qu}, $\mathrm{D}_{2n}$ is a 3-DCI-group if and only if $\mathrm{D}_{2n}$ is a 3-CI-group if and only if $n$ is odd. This together with Theorem \ref{mainth1} gives the following corollary.

\begin{cor}\label{cor1}
Let $n\ge 3$ be an integer. Then
\begin{enumerate}[{\rm (i)}]
\item $\mathrm{D}_{2n}$ is a $4$-DCI-group if and only if $n$ is odd and not divisible by $9$; and
\item $\mathrm{D}_{2n}$ is a $4$-CI-group if and only if $n$ is odd.
\end{enumerate}
\end{cor}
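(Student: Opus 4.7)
The plan is to unfold the definition of an $m$-(D)CI-group and combine Theorem~\ref{mainth1} with the result of Qu and Yu cited immediately before the corollary. By definition, $\mathrm{D}_{2n}$ is a $4$-DCI-group exactly when it has the $k$-DCI property for every $k\in\{1,2,3,4\}$, and similarly $\mathrm{D}_{2n}$ is a $4$-CI-group exactly when it has the $k$-CI property for every such $k$. So the corollary amounts to intersecting the characterisations of these four properties.

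For part~(i), I first invoke Qu and Yu's theorem, which gives that $\mathrm{D}_{2n}$ has the $k$-DCI property for each $k\in\{1,2,3\}$ if and only if $n$ is odd. I then invoke Theorem~\ref{mainth1}(i), which says that $\mathrm{D}_{2n}$ has the $4$-DCI property if and only if $n$ is odd and not divisible by~$9$. The conjunction of the two conditions ``$n$ odd'' and ``$n$ odd and not divisible by~$9$'' is simply ``$n$ odd and not divisible by~$9$'', establishing~(i). For part~(ii), the same reasoning applies verbatim with the CI-variants: the Qu--Yu characterisation yields ``$n$ odd'' for $k\in\{1,2,3\}$, and Theorem~\ref{mainth1}(ii) yields ``$n$ odd'' for $k=4$, whose conjunction is ``$n$ odd''.

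Since the substantive content is carried entirely by Theorem~\ref{mainth1} (whose proof is the body of the paper) and by the already-published theorem of Qu and Yu, there is no genuine obstacle here; the corollary is a bookkeeping consequence. The only minor point to note is the monotonicity of the conditions involved --- ``$n$ odd and not divisible by $9$'' implies ``$n$ odd'' --- which guarantees that the intersection collapses to the stated answer rather than to a strictly stronger arithmetic condition.
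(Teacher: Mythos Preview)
Your proof is correct and matches the paper's own justification: the corollary is deduced by combining the Qu--Yu result (that $\mathrm{D}_{2n}$ is a $3$-(D)CI-group if and only if $n$ is odd) with Theorem~\ref{mainth1}, exactly as you do. There is nothing to add.
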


Following this introduction, we state in Section 2 some preliminary results which are used in the following sections. Section 3 proves that every connected Cayley digraphs of $\mathrm{D}_{2n}$ with $n\geq 3$ odd is a CI-digraph, and Section 4 gives a proof of Theorem~\ref{mainth1}.

\vskip 20pt

\section{Preliminaries}\label{sect=Pre}
This section collects some  concepts and  results, which are used in Sections \ref{sect=Tech} and \ref{sect=Prf}.

For a nonempty set $\Omega$, denote by $\mathrm{Sym}(\Omega)$ the \emph{symmetric group} on $\Omega$. In particular, if $\Omega=\{1,\ldots,n\}$ for some positive integer $n$, we write $\mathrm{Sym}(\Omega)$ as $\mathrm{S}_n$ for convenience. Let $\Gamma$ be a digraph. For $B,B_1,B_2\subseteq V(\Gamma)$, denote $[B]$ the subdigraph of $\Gamma$ induced by $B$, and denote $[B_1,B_2]$ the subdigraph of $\Gamma$ obtained from $[B_1\cup B_2]$ by deleting all arcs in $[B_1]$ and $[B_2]$. Denote by $\overrightarrow{\mathrm{K}}_{m,n}$ the orientation of the complete bipartite graph $\mathrm{K}_{m,n}$ with heads of all arcs lying in the  part of size $n$.

Let $\Gamma=\Cay(G,S)$ be a Cayley digraph of a group $G$, and let $A=\Aut(\Gamma)$. Each $g\in G$ induces $R(g)$ of $A$ by the right multiplication on $G$. Write $R(G)=\{R(g)\mid g\in G\}$. Then $R(G)$ is a regular subgroup of $A$, and $g\mapsto R(g)$ gives an isomorphism from $G$ to $R(G)$. Consider the normalizer of $R(G)$ in $\mathrm{Sym}(G)$ on $G$. We have
$\mathrm{N}_{\mathrm{Sym}(G)}(R(G))=R(G)\Aut(G)$, see \cite[Lemma 7.16]{Rotman} for example. Then
$\mathrm{N}_{A}(R(G))=R(G)\Aut(G)\cap A=R(G)\Aut(G,S)$, where $\Aut(G,S)=\{\sigma\in \Aut(G)\mid S^\sigma=S\}$.
In particular, $R(G)$ is a normal subgroup of $A$ if and only if $A=R(G)\Aut(G,S)$; in this case, $\Gamma$ is called a \emph{normal} Cayley digraph of the group $G$. Thus, we have the following result, see also  \cite[Proposition 1.3 and Propositions 1.5]{Xu1}.

\begin{pro}\label{N_AUT}
Let $\Gamma=\Cay(G,S)$ be a Cayley digraph of a group $G$. Then
\begin{enumerate}[{\rm (i)}]
\item $\mathrm{N}_{\Aut(\Gamma)}(R(G))=R(G)\rtimes\Aut(G,S)$; and
\item $\Gamma$ is   normal   with respect to   $G$ if and only if $\Aut(\Gamma)=R(G)\Aut(G,S)$.
\end{enumerate}
\end{pro}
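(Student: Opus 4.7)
The proposition essentially formalizes the three-line argument already sketched in the preceding paragraph, so the plan is to turn that sketch into a clean proof. First I would take as given the classical identity $\mathrm{N}_{\mathrm{Sym}(G)}(R(G))=R(G)\Aut(G)$ cited from Rotman (here $\Aut(G)$ is regarded as the subgroup of $\mathrm{Sym}(G)$ fixing the identity $1$ of $G$ and acting as group automorphisms). Since $R(G)\cap\Aut(G)=1$ (the only right translation fixing $1$ is the identity) and $\Aut(G)$ normalizes $R(G)$, this product is in fact a semidirect product $R(G)\rtimes\Aut(G)$.

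For part (i), write $A=\Aut(\Gamma)$. Since $R(G)\leq A$, intersecting $A$ with the preceding equality gives
\[
\mathrm{N}_A(R(G))=\mathrm{N}_{\mathrm{Sym}(G)}(R(G))\cap A=(R(G)\rtimes\Aut(G))\cap A=R(G)(\Aut(G)\cap A).
\]
So the key step is to identify $\Aut(G)\cap A$ with $\Aut(G,S)$. For $\sigma\in\Aut(G)$, the definition of $\Cay(G,S)$ gives that $(x,y)$ is an arc iff $yx^{-1}\in S$, hence $(x^\sigma,y^\sigma)$ is an arc iff $(yx^{-1})^\sigma\in S$; thus $\sigma\in A$ iff $S^\sigma=S$, i.e.\ $\sigma\in\Aut(G,S)$. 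Combining this with $R(G)\cap\Aut(G,S)=1$ yields $\mathrm{N}_A(R(G))=R(G)\rtimes\Aut(G,S)$, as required.

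Part (ii) is then immediate: $R(G)$ is normal in $A$ iff $\mathrm{N}_A(R(G))=A$, which by (i) is equivalent to $A=R(G)\Aut(G,S)$. The only obstacles are bookkeeping in two places: making sure $\Aut(G,S)\leq A$ is used (automorphisms of $G$ preserving $S$ fix $1$ and map arcs to arcs) so that the product $R(G)\Aut(G,S)$ genuinely lies inside $A$, and being explicit that the semidirect product decomposition survives the intersection because $\Aut(G,S)\leq \Aut(G)$ still meets $R(G)$ trivially. Neither is substantive; the whole argument is essentially two applications of the normalizer formula plus the definition of a Cayley arc.
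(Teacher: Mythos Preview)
Your proposal is correct and follows exactly the approach the paper takes: the paragraph immediately preceding the proposition already outlines the argument via $\mathrm{N}_{\mathrm{Sym}(G)}(R(G))=R(G)\Aut(G)$ (citing Rotman), intersects with $A$ using the modular law (since $R(G)\leq A$), and identifies $\Aut(G)\cap A=\Aut(G,S)$; the proposition itself is then stated without further proof beyond a reference to \cite{Xu1}. Your write-up just makes each of these steps explicit, including the semidirect product structure coming from $R(G)\cap\Aut(G)=1$.
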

The result stated below is derived from the well-known Babai criterion~\cite{Babai} for determining whether a Cayley digraph is a CI-digraph, also refer to \cite[Theorem 4.1]{C.H.Li8}.

\begin{pro}\label{CI-graph-prop}
Let $\Gamma=\Cay(G,S)$ be a Cayley digraph of a group $G$. Then $\Gamma$ is a CI-digraph if and only if every regular subgroup of $\Aut(\Gamma)$ isomorphic to $G$ is conjugate to $R(G)$ in $\Aut(\Gamma)$. If further $\Gamma$ is normal with respect to the $G$, then $\Gamma$ is a CI-digraph if and only if $R(G)$ is the unique regular subgroup of $\Aut(\Gamma)$ isomorphic to $G$.
\end{pro}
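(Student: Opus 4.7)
The plan is to establish the first equivalence directly from the normalizer identity $\mathrm{N}_{\mathrm{Sym}(G)}(R(G)) = R(G)\Aut(G)$ recalled in the paragraph preceding the statement, and then to deduce the second equivalence from the first.

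For the reverse direction of the first equivalence, I would take $T \subseteq G \setminus \{1\}$ with $\Cay(G, T) \cong \Gamma$ via a digraph isomorphism $f$ and form $H := f R(G) f^{-1}$, which is a regular subgroup of $\Aut(\Gamma)$ isomorphic to $G$. The hypothesis yields $\alpha \in \Aut(\Gamma)$ with $\alpha^{-1} H \alpha = R(G)$, so $\alpha^{-1} f$ normalises $R(G)$ in $\mathrm{Sym}(G)$ and hence factors as $R(h)\sigma$ for some $h \in G$ and $\sigma \in \Aut(G)$. I would then observe that $\sigma = R(h)^{-1} \alpha^{-1} f$ is a composition of digraph isomorphisms $\Cay(G,T) \to \Gamma \to \Gamma \to \Gamma$ and hence itself a digraph isomorphism $\Cay(G,T) \to \Gamma$; combined with $\sigma \in \Aut(G)$, this forces $T^\sigma = S$, producing the required Cayley isomorphism.

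For the forward direction, let $H \leq \Aut(\Gamma)$ be regular with $H \cong G$, fix an isomorphism $\theta: G \to H$, and define $\psi \in \mathrm{Sym}(G)$ by $\psi(g) = 1^{\theta(g)}$. A short computation gives $\psi R(g) = \theta(g) \psi$ for every $g \in G$, so $\psi R(G)\psi^{-1} = H$, and $\psi$ becomes a digraph isomorphism from $\Cay(G, T)$ onto $\Gamma$ with $T := \psi^{-1}(S)$. The CI hypothesis then supplies $\sigma \in \Aut(G)$ with $S^\sigma = T$, so $\alpha := \psi \sigma$ is an automorphism of $\Gamma$; since $\sigma \in \Aut(G)$ normalises $R(G)$ in $\mathrm{Sym}(G)$, we obtain $\alpha R(G)\alpha^{-1} = \psi R(G) \psi^{-1} = H$, exhibiting $H$ as an $\Aut(\Gamma)$-conjugate of $R(G)$.

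If $\Gamma$ is normal with respect to $G$, then $R(G) \triangleleft \Aut(\Gamma)$ by Proposition~\ref{N_AUT}(ii), so the only $\Aut(\Gamma)$-conjugate of $R(G)$ is $R(G)$ itself; the first equivalence then immediately reads ``$\Gamma$ is a CI-digraph if and only if every regular subgroup of $\Aut(\Gamma)$ isomorphic to $G$ equals $R(G)$,'' which is precisely the uniqueness statement. The main obstacle is the forward direction, where one must recognise that an abstract regular subgroup $H \leq \Aut(\Gamma)$ gives rise to an alternative Cayley presentation of $\Gamma$ on the same group $G$ and set up $\psi$ carefully enough that the Cayley isomorphism supplied by the CI hypothesis can be promoted to an element of $\Aut(\Gamma)$ conjugating $H$ to $R(G)$; once this translation is in place, both directions reduce to routine applications of the normalizer identity.
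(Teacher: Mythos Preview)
The paper does not actually prove this proposition: it is stated without proof and attributed to Babai's criterion \cite{Babai} (see also \cite[Theorem~4.1]{C.H.Li8}). Your write-up is a correct, self-contained proof of that criterion together with the normal-case corollary, and it is exactly the standard argument one finds in the literature. The key ingredients---the normalizer identity $\mathrm{N}_{\mathrm{Sym}(G)}(R(G))=R(G)\Aut(G)$, the construction of $\psi$ from a regular subgroup $H$, and the observation that normality of $R(G)$ collapses ``conjugate to'' into ``equal to''---are all deployed correctly.

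One small quibble: with the paper's right-action convention ($x^{R(g)}=xg$ and $x^{\alpha\beta}=(x^\alpha)^\beta$), the computation from $\psi(g)=1^{\theta(g)}$ actually gives $R(g)\psi=\psi\,\theta(g)$, hence $\psi^{-1}R(G)\psi=H$ rather than $\psi R(G)\psi^{-1}=H$ as you wrote. This is purely a direction issue and does not affect the argument: you still obtain an element of $\mathrm{Sym}(G)$ conjugating $R(G)$ to $H$, and after composing with the $\sigma\in\Aut(G)$ supplied by the CI hypothesis you land in $\Aut(\Gamma)$, exactly as needed. Just be consistent with whichever side you conjugate on when you write it out in full.
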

Li~\cite{C.H.Li8} described the abelian Cayley digraphs which are CI-digraphs. By~\cite[Theorem~6.8]{C.H.Li8} and~\cite[Theorem~2]{Delorme}, we have the following proposition.

\begin{pro}\label{4-cyclic}
Let $G$ be a cyclic group of order $n>1$, and let $\Gamma=\Cay(G,S)$ be a connected Cayley digraphs of $G$ with $|S|=4$.
\begin{enumerate}[{\rm (i)}]
\item If $n$ is odd and indivisible by $9$  then $\Gamma$ is a CI-digraph.
\item If $S=S^{-1}$ then $\Gamma$ is a  CI-graph.
\end{enumerate}
\end{pro}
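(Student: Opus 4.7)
The plan is to derive each part from an established result on circulant digraphs in the literature, as suggested by the two citations that the authors attach to the statement.

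For part (ii), I would invoke Delorme's theorem on isomorphisms of tetravalent Cayley graphs of cyclic groups, which asserts that every connected tetravalent Cayley graph on $\mathbb{Z}_n$ is a CI-graph without any restriction on $n$. Since the hypothesis $S=S^{-1}$ guarantees that $\Gamma$ is in fact a graph, part (ii) is immediate.

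For part (i), the natural route is through Proposition~\ref{CI-graph-prop}: one must show that, under the hypothesis that $n$ is odd and $9\nmid n$, every regular subgroup of $\Aut(\Gamma)$ isomorphic to $\mathbb{Z}_n$ is conjugate in $\Aut(\Gamma)$ to $R(G)$. The natural dichotomy is whether $\Gamma$ is normal with respect to $R(G)$. In the normal case, Proposition~\ref{N_AUT} combined with the standard fact that a cyclic group has a unique regular representation inside its own holomorph yields that $R(G)$ is the unique regular cyclic subgroup of $\Aut(\Gamma)$ of order $n$, and we are done. In the non-normal case, one must analyze the possible shapes of $\Aut(\Gamma)$ for tetravalent circulant digraphs; this is exactly the content captured by Li's classification of small-valency abelian CI-digraphs in \cite[Theorem~6.8]{C.H.Li8}, usually established via the theory of Schur rings of small rank on $\mathbb{Z}_n$ and generalised wreath decompositions.

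The main obstacle is the non-normal case of part (i): one must check that the only square factor creating an obstruction at valency $4$ is $9$. Square-free odd $n$ causes no problem by Muzychuk's classification of cyclic DCI-groups, and larger prime squares $p^2$ with $p\ge 5$ only obstruct CI-ness at valencies $\ge p$, hence not at valency $4$; so the real content is to isolate $9$ as the unique failure. Because this subtle analysis is already encapsulated in \cite[Theorem~6.8]{C.H.Li8}, the cleanest strategy, and the one I would follow, is to quote that theorem directly for part (i) and Delorme's theorem for part (ii).
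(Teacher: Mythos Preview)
Your proposal is correct and matches the paper's approach exactly: the paper does not give an independent proof of this proposition but simply derives it from \cite[Theorem~6.8]{C.H.Li8} for part~(i) and \cite[Theorem~2]{Delorme} for part~(ii), precisely the two citations you identify. Your intermediate discussion about the normal/non-normal dichotomy and the role of the obstruction at $9$ is extraneous to what is actually needed here, since (as you yourself conclude) the result is quoted wholesale rather than reproved.
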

A group $G$ is called an \emph{NDCI-group} or an \emph{NCI-group} if each normal Cayley digraph  or graph  of $G$ is a CI-digraph  or a CI-graph, respectively. The following result, quoted from ~\cite{XFZ}, characterizes the dihedral NDCI-groups and NCI-groups.

\begin{pro}\label{NDCI}
Let $n\geq 2$ be an integer. Then the dihedral group $\mathrm{D}_{2n}$ is an NDCI-group if and only if $\mathrm{D}_{2n}$ is an NCI-group if and only if either $n\in\{2,4\}$ or $n$ is odd.
\end{pro}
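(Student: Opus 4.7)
Apply Proposition~\ref{CI-graph-prop} to reduce the statement to a uniqueness question on regular subgroups: a normal Cayley digraph $\Gamma=\Cay(\mathrm{D}_{2n},S)$ is a CI-digraph if and only if $R(\mathrm{D}_{2n})$ is the unique regular subgroup of $A:=R(\mathrm{D}_{2n})\rtimes\Aut(\mathrm{D}_{2n},S)$ isomorphic to $\mathrm{D}_{2n}$. So the goal is to decide, for each $n$, whether every group of the form $A=R(\mathrm{D}_{2n})\rtimes K$ with $K=\Aut(\mathrm{D}_{2n},S)$ (for some $S\subseteq \mathrm{D}_{2n}\setminus\{1\}$, or some inverse-closed $S$) admits $R(\mathrm{D}_{2n})$ as its only regular subgroup isomorphic to $\mathrm{D}_{2n}$.

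For sufficiency when $n$ is odd, I would use that $\langle a\rangle$ is a characteristic subgroup of $\mathrm{D}_{2n}$, being the unique Hall subgroup of odd order $n$. Consequently any regular $H\leq A$ with $H\cong\mathrm{D}_{2n}$ carries a characteristic cyclic subgroup $C$ of order $n$, and $R(\langle a\rangle)\trianglelefteq A$ because it is characteristic in the normal subgroup $R(\mathrm{D}_{2n})$. Studying the image of $C$ in $A/R(\langle a\rangle)$, whose elements of odd order come only from the odd-order part of $R(\mathrm{D}_{2n})/R(\langle a\rangle)$ together with the odd-order part of $K$, parity and Schur--Zassenhaus considerations force $C$ to be $A$-conjugate to $R(\langle a\rangle)$. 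After this conjugation, $H=R(\langle a\rangle)\langle t\rangle$ for some involution $t$ inverting $R(\langle a\rangle)$, and a further conjugation inside $\mathrm{N}_A(R(\langle a\rangle))$ moves $t$ into $R(\mathrm{D}_{2n})\setminus R(\langle a\rangle)$, yielding $H=R(\mathrm{D}_{2n})$. The cases $n\in\{2,4\}$ I would settle by direct inspection: for $n=2$, $\mathrm{D}_4\cong C_2\times C_2$ falls under the classical CI-classification of elementary abelian $2$-groups of rank~$2$; for $n=4$ the inequality $|\Aut(\mathrm{D}_8)|=8$ forces $|A|\leq 64$, leaving only a short enumerable list of possible $K$ to check by hand.

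For necessity I would exhibit, for every even $n\geq 6$, a normal Cayley digraph of $\mathrm{D}_{2n}$ failing to be a CI-digraph, together with a corresponding graph example. The guiding observation is that the central involution $z=a^{n/2}$ is fixed by every automorphism of $\mathrm{D}_{2n}$. When $n=2k$ with $k\geq 3$ odd, the direct-product decomposition $\mathrm{D}_{4k}\cong \mathrm{D}_{2k}\times\langle z\rangle$ produces a second regular copy of $\mathrm{D}_{2n}$ inside a suitable wreath-type automorphism group, obtained by ``swapping'' $z$ with another involution; for $n\equiv 0\pmod 4$ one uses the non-trivial $\Aut(\mathrm{D}_{2n})$-action on the classes of non-central involutions to create an analogous twist. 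In each subcase the connection set $S$ is chosen to contain $z$ together with enough further elements to make $\Cay(\mathrm{D}_{2n},S)$ normal while preserving the alternative regular subgroup.

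The main obstacle is the necessity direction. One must engineer $S$ so that $\Aut(\mathrm{D}_{2n},S)$ is simultaneously large enough to produce a second regular copy of $\mathrm{D}_{2n}$ in $A$ and small enough that the full automorphism group of the Cayley digraph still equals $A$, and one must then verify that this second regular subgroup is genuinely non-conjugate to $R(\mathrm{D}_{2n})$ in $A$. Treating the subcases $n=6$, $n\equiv 2\pmod{4}$ with $n\geq 10$, and $n\equiv 0\pmod 4$ with $n\geq 8$ uniformly, and producing both a digraph and a graph witness in each, is the most delicate bookkeeping in the argument.
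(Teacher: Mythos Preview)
The paper does not contain its own proof of this proposition. It is stated as a quotation from \cite{XFZ} (Xie--Feng--Zhou, \emph{Ars Math.\ Contemp.}\ 2023), so there is nothing in the present paper to compare your attempt against.

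That said, the paper does contain material closely related to your necessity direction: Lemma~\ref{even} constructs, for every even $n\geq 4$, an explicit inverse-closed connection set $S=\{a,a^{-1},ab,a^{n/2+1}b\}$ such that $\Cay(\mathrm{D}_{2n},S)$ is not a CI-graph, and proves that for $n>4$ this Cayley graph is normal with respect to $\mathrm{D}_{2n}$. The proof exhibits a second regular dihedral subgroup $L=\langle R(a^2b)\beta,\,R(ab)\rangle$ with $\beta=\sigma_{-1,2}$, and verifies normality by analysing the first two distance-layers around $1$. This is exactly the kind of concrete witness your sketch promises but does not supply; if you wish to fill in the necessity side yourself, that lemma shows one uniform construction that works for all even $n\ge 6$, rather than the case split you anticipate.

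For the sufficiency side (odd $n$), your outline is in the right spirit but the step ``parity and Schur--Zassenhaus considerations force $C$ to be $A$-conjugate to $R(\langle a\rangle)$'' is not yet an argument: $K=\Aut(\mathrm{D}_{2n},S)$ can have nontrivial odd order, so the image of $C$ in $A/R(\langle a\rangle)$ need not be trivial on order grounds alone, and you have not explained why $C$ lands inside a conjugate of $R(\langle a\rangle)$ rather than partly in $K$. Likewise, the claim that a further conjugation inside $\mathrm{N}_A(R(\langle a\rangle))$ moves the inverting involution $t$ into $R(\mathrm{D}_{2n})$ needs justification: you must show that all involutions of $A$ inverting $R(\langle a\rangle)$ and acting semiregularly are conjugate, which is not automatic. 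If you want a self-contained proof, these are the two places requiring real work; otherwise, citing \cite{XFZ} as the paper does is the honest route.
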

According to \cite[Lemma~4.1 and Lemma~4.2]{XFK}, we have the following lemma.

\begin{lem}\label{p-CI}
Let $n>1$ be an odd integer, and let $\Gamma=\Cay(\mathrm{D}_{2n},S)$ be a Cayley digraph.
\begin{enumerate}[{\rm (i)}]
\item If $X$ is a regular subgroup of $\Aut(\Gamma)$ that is isomorphic to $\mathrm{D}_{2n}$, then $X$ is conjugate to $R(\mathrm{D}_{2n})$ in $\Aut(\Gamma)$ if and only if the unique cyclic subgroup of order $n$ of $X$ is conjugate to the unique cyclic subgroup of order $n$ of $R(\mathrm{D}_{2n})$ in $\Aut(\Gamma)$.
\item If the stabilizer of $1$ in $\Aut(\Gamma)$ has order coprime to $n$, then $\Gamma$ is a CI-digraph.
\item If $\Gamma$ is connected and $|S|$ is less than the least prime divisor of $n$, then $\Gamma$ is a CI-digraph.
\end{enumerate}
\end{lem}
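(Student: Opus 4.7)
The plan is to establish the three parts in order, with (ii) reducing to (i) through a Hall-subgroup argument and (iii) reducing to (ii) via a connectivity argument.

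For (i), one direction is immediate: since $n$ is odd, the cyclic subgroup of order $n$ is the unique subgroup of order $n$ in any copy of $\mathrm{D}_{2n}$, so any conjugation of $X$ onto $R(\mathrm{D}_{2n})$ must carry the cyclic subgroup of order $n$ in $X$ onto $R(\langle a\rangle)$. For the converse, I would let $C$ denote the cyclic subgroup of order $n$ in $X$, choose $h\in \Aut(\Gamma)$ with $C^h=R(\langle a\rangle)$, and replace $X$ by $X^h$ to reduce to the case $C=R(\langle a\rangle)$. The two $R(\langle a\rangle)$-orbits on $V(\Gamma)=\mathrm{D}_{2n}$ are then $\langle a\rangle$ and $\langle a\rangle b$, and by regularity of $X$ there is a unique $y\in X\setminus R(\langle a\rangle)$ with $y(1)=b$. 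Since $n$ is odd, every element of $X\setminus C$ is an involution inverting $C$ by conjugation, so $yR(a^j)=R(a^{-j})y$. The central computation is then
\[
y(a^j)=yR(a^j)(1)=R(a^{-j})(b)=b\cdot a^{-j}=a^jb=R(b)(a^j),
\]
together with $y(a^jb)=y^2(a^j)=a^j=R(b)(a^jb)$. Thus $y$ and $R(b)$ agree on all of $V(\Gamma)$, so $y=R(b)$, giving $R(\mathrm{D}_{2n})=\langle R(\langle a\rangle),R(b)\rangle\leq X$; comparing orders yields $X=R(\mathrm{D}_{2n})$, and the original $X$ is conjugate to $R(\mathrm{D}_{2n})$ via $h^{-1}$.

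For (ii), by Proposition~\ref{CI-graph-prop} it suffices to show that every regular subgroup $X\leq \Aut(\Gamma)$ isomorphic to $\mathrm{D}_{2n}$ is conjugate to $R(\mathrm{D}_{2n})$, and by (i) it is enough to conjugate the unique cyclic subgroup $C$ of order $n$ in $X$ onto $R(\langle a\rangle)$. Since $|\Aut(\Gamma)|=2n\,|\Aut(\Gamma)_1|$, the hypothesis $\gcd(|\Aut(\Gamma)_1|,n)=1$ together with $n$ odd makes the $\pi(n)$-part of $|\Aut(\Gamma)|$ equal to $n$, so both $C$ and $R(\langle a\rangle)$ are Hall $\pi(n)$-subgroups of $\Aut(\Gamma)$. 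Being cyclic they are nilpotent, and Wielandt's theorem on nilpotent Hall subgroups then gives that all Hall $\pi(n)$-subgroups of $\Aut(\Gamma)$ are conjugate, which combined with (i) completes the argument.

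For (iii), I plan to verify the hypothesis of (ii). Suppose for contradiction that some prime $q$ dividing $n$ also divides $|\Aut(\Gamma)_1|$; then $q\geq p>|S|$, and by Cauchy there exists $\sigma\in \Aut(\Gamma)_1$ of order $q$. Every $\sigma$-orbit on $V(\Gamma)$ has size $1$ or $q$, and $\sigma$ permutes $\Gamma^+(1)=S$; since $|S|<q$ the only option is that $\sigma$ fixes $S$ pointwise. Inductively, whenever $\sigma$ fixes a vertex $v$ it permutes $\Gamma^+(v)=vS$ of size less than $q$, hence fixes every out-neighbour of $v$; since $\Gamma$ is connected, $S$ generates $\mathrm{D}_{2n}$ and every vertex is reachable from $1$ by a directed walk, forcing $\sigma=1$, a contradiction. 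Thus $\gcd(|\Aut(\Gamma)_1|,n)=1$ and (ii) applies.

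The hard part will be pinning down the computation in (i): showing that once $R(\langle a\rangle)$ is shared between $X$ and $R(\mathrm{D}_{2n})$, the remaining involution generator is forced to coincide with $R(b)$. Once this rigidity is isolated, (ii) is a routine appeal to Wielandt's theorem on nilpotent Hall subgroups, and (iii) is a standard orbit-plus-Cauchy argument exploiting $|S|<p$.
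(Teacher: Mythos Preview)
Your proof is correct. The paper itself does not prove this lemma; it simply quotes it from \cite[Lemmas~4.1 and~4.2]{XFK}, so there is no in-paper argument to compare against. Your approach---reducing (i) to an explicit rigidity computation once the cyclic subgroups coincide, deducing (ii) from (i) via Wielandt's theorem on nilpotent Hall subgroups, and obtaining (iii) from (ii) by the orbit-size argument forcing an element of prime order $q>|S|$ in the stabilizer to fix every out-neighbour---is standard and sound. Two minor remarks: in (i) you might state explicitly that the unique element of $X$ sending $1$ to $b$ lies outside $R(\langle a\rangle)$ because $1$ and $b$ are in distinct $R(\langle a\rangle)$-orbits; and in (iii) you could note (as the paper does elsewhere, citing \cite[Lemma~2.6.1]{GR}) that a connected vertex-transitive digraph is strongly connected, so every vertex is indeed reachable from $1$ by a directed walk. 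Neither point is a gap, just a matter of presentation.
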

The statement below, derived from~\cite[Theorem~1.1]{XFK}, gives a necessary condition of dihedral groups with the $m$-DCI property.

\begin{pro}\label{p-odd-DCI}
Let $n\geq 3$ and $m$ be  integers with $1\leq m\leq n-1$. If $\mathrm{D}_{2n}$ has the $m$-DCI property, then $n$ is odd and indivisible by the square of any prime less than $m$.
\end{pro}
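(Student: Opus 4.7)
The plan is to prove the contrapositive: for each pair $(n,m)$ with $n\geq 3$ and $1\leq m\leq n-1$ that violates the conclusion---that is, either (a) $n$ is even, or (b) $n=p^2\ell$ for some prime $p<m$---one produces a non-CI Cayley digraph of $\mathrm{D}_{2n}$ of valency exactly $m$. In each sub-case the strategy is standard: start from a small-valency ``core'' counterexample exploiting the arithmetic or group-theoretic pathology of $n$, then pad both connection sets $S,T$ by a common subset of $\mathrm{D}_{2n}\setminus\{1\}$ until the valency reaches $m$, taking care that the abstract isomorphism between the Cayley digraphs survives and no Cayley isomorphism is introduced.

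For case~(a), the central involution $a^{n/2}$ and any reflection $b$ yield isomorphic valency-$1$ Cayley digraphs (each is $n$ disjoint directed $2$-cycles), yet they cannot be exchanged by any $\sigma\in\Aut(\mathrm{D}_{2n})$, because $a^{n/2}\in Z(\mathrm{D}_{2n})$ while $b\notin Z(\mathrm{D}_{2n})$ for $n\geq 3$. Thus $\{a^{n/2}\}$ and $\{b\}$ form a valency-$1$ non-CI pair. For larger $m$, one augments both sides by a common subset of $\mathrm{D}_{2n}\setminus\{1\}$ invariant under the bijection pairing the two sets of directed $2$-cycles, while the centrality obstruction continues to block every candidate $\sigma$.

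For case~(b), restrict attention to the cyclic subgroup $\langle a\rangle\cong\mathbb{Z}_n$. By the classical Elspas--Turner construction and its extension via the cyclic $m$-DCI criterion of Li and Dobson (cited in the introduction), $\mathbb{Z}_n$ fails to have the $m$-DCI property whenever $p^2\mid n$ with $p<m$; let $S_0,T_0\subset\langle a\rangle\setminus\{1\}$ of size $m$ witness this failure. Inside $\mathrm{D}_{2n}$, the digraph $\Cay(\mathrm{D}_{2n},S_0)$ decomposes as two disjoint copies of $\Cay(\mathbb{Z}_n,S_0)$ (one on each coset of $\langle a\rangle$), and similarly for $T_0$, so the two $\mathrm{D}_{2n}$-digraphs are isomorphic. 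Any $\sigma\in\Aut(\mathrm{D}_{2n})$ with $S_0^\sigma=T_0$ would have to preserve the characteristic subgroup $\langle a\rangle$ setwise, and would then restrict to an automorphism of $\mathbb{Z}_n$ mapping $S_0$ to $T_0$, contradicting the cyclic non-CI witness.

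The main obstacle is case~(a): inflating the valency-$1$ core to valency $m$ while simultaneously preserving the abstract (non-Cayley) isomorphism and blocking every Cayley isomorphism requires an explicit and not-quite-uniform padding argument, typically organised by a case analysis on $m$ and on the $\Aut(\mathrm{D}_{2n})$-orbit structure of $\mathrm{D}_{2n}\setminus\{1\}$. Case~(b), by contrast, is a clean reduction to a known cyclic result via the disjoint-union construction, with the only substantive input being the existence of the cyclic counterexample at the specified valency.
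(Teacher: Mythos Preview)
The paper does not prove this proposition; it is quoted from \cite[Theorem~1.1]{XFK}, so there is no in-paper argument to compare against directly.

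Your case~(b) is essentially correct: once $n$ is assumed odd (even $n$ having been handled in case~(a)), the subgroup $\langle a\rangle$ is characteristic in $\mathrm{D}_{2n}$, so any $\sigma\in\Aut(\mathrm{D}_{2n})$ carrying $S_0$ to $T_0$ would restrict to an automorphism of $\langle a\rangle\cong\mathbb{Z}_n$ doing the same, contradicting the choice of cyclic witness. The only external input you need is that $\mathbb{Z}_n$ genuinely fails the $m$-DCI property whenever some prime $p<m$ has $p^2\mid n$; this is the necessary direction of Li's criterion~\cite{C.H.Li1}, so the citation is legitimate.

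Case~(a), however, is a plan rather than a proof. The valency-$1$ core and the centrality obstruction are both right---since $a^{n/2}$ is the unique central involution for even $n\geq 4$, it is fixed by every automorphism, so $a^{n/2}\in S\setminus T$ already forces $S\not\equiv T$ regardless of how you pad. What is missing is the existence, for every $2\leq m\leq n-1$, of a padding set $R$ of size $m-1$ disjoint from $\{a^{n/2},b\}$ together with a bijection $\phi$ of $\mathrm{D}_{2n}$ that simultaneously carries the matching $\Cay(\mathrm{D}_{2n},\{a^{n/2}\})$ to $\Cay(\mathrm{D}_{2n},\{b\})$ and lies in $\Aut(\Cay(\mathrm{D}_{2n},R))$. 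You describe this as ``the main obstacle'' and then stop; but that construction \emph{is} the content of the even-$n$ case. It is not routine: one must exhibit an explicit $\phi$ (equivalently, an element of $\mathrm{Sym}(\mathrm{D}_{2n})$ conjugating left-multiplication by $a^{n/2}$ to left-multiplication by $b$), determine which connection sets $R$ it preserves, and verify that every size $0\leq m-1\leq n-2$ is achieved among those. Without this, case~(a) remains a heuristic rather than an argument.
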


Given $\mathrm{D}_{2n}=\langle a,b\mid a^n=b^2=1,\,bab=a^{-1}\rangle$, it is shown that
\[
\Aut(\mathrm{D}_{2n})=\{\sigma_{r,s}\mid s,r \mbox{ are integers},\,(r,n)=1\},
\]
where $\sigma_{r,s}$ is defined by
\[
(a^i)^{\sigma_{r,s}}=a^{ri},\,(a^jb)^{\sigma_{r,s}}=a^{rj+s}b.
\]
Note that $\sigma_{r,s}=\sigma_{r',s'}$ if and only if $r\equiv r' \pmod n $ and $s\equiv s' \pmod n$. For an integer $w\ge 1$, we have \[
a^{\sigma_{r,s}^w}=a^{r^w},\, b^{\sigma_{r,s}^w}=a^{s(r^{w-1}+\cdots +r+1)}b.
\]
Then
\begin{equation}\label{00}
\sigma_{r,s}^w=1 \text{ if and only if }r^w\equiv 1\pmod n \text{ and }s(r^{w-1}+\cdots +r+1)\equiv 0\pmod n.
\end{equation}

Clearly, if $n$ is even then $a^{n\over 2}$ has order $2$ and lies in the center of $\mathrm{D}_{2n}$; in this case, one can not extend an isomorphism between $\langle a^{n\over 2}\rangle$ and $\langle b\rangle$ to some automorphism of $\mathrm{D}_{2n}$ unless $n=2$. Recall that a  group $G$ is said to be \emph{homogeneous} if every isomorphism between subgroups of  $G$
can be extended to an automorphism of $G$. By \cite[Lemmas~1.6 and 1.9]{Qu}, the next lemma follows.

\begin{lem}\label{homogeneous}
Let $n$ be a positive integer. Then the cyclic group $\mathbb{Z}_n$ is homogeneous, and the dihedral group $\mathrm{D}_{2n}$ is homogeneous if and only if $n=2$ or $n$ is odd.
\end{lem}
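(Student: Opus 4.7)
My plan is to dispatch the cyclic case by a short CRT argument and then split the dihedral case into $n$ odd, $n=2$, and $n\ge 4$ even. Every subgroup of $\mathbb{Z}_n=\langle a\rangle$ has the form $\langle a^d\rangle$ for some divisor $d\mid n$, so any isomorphism between two subgroups is necessarily an automorphism of a single $\langle a^d\rangle$, acting as $a^d\mapsto a^{dr'}$ for some $r'$ with $\gcd(r',n/d)=1$. Using the Chinese Remainder Theorem prime-by-prime on $n$, I would choose $r$ coprime to $n$ with $r\equiv r'\pmod{n/d}$; then $a\mapsto a^r$ extends the given isomorphism to an automorphism of $\mathbb{Z}_n$.

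For $\mathrm{D}_{2n}$ with $n$ odd, I would first observe that every proper subgroup either lies in $\langle a\rangle$ (and is cyclic of order $n/d$ for some $d\mid n$) or contains a reflection, in which case it has the form $\langle a^d,a^ib\rangle$ and is dihedral of order $2(n/d)$. Because $n/d$ is odd, $\langle a^d\rangle$ is the unique cyclic subgroup of order $n/d$ in $\langle a^d,a^ib\rangle$, so any isomorphism $\phi$ between two such dihedral subgroups must preserve the rotation part, and is therefore parameterized by an automorphism $a^d\mapsto a^{dr'}$ of $\langle a^d\rangle$ together with the image $a^jb$ of a chosen reflection $a^{i_1}b$. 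I would apply the CRT step above to obtain $r$ coprime to $n$ with $r\equiv r'\pmod{n/d}$, then set $s\equiv j-ri_1\pmod n$; the formulas $(a^i)^{\sigma_{r,s}}=a^{ri}$ and $(a^jb)^{\sigma_{r,s}}=a^{rj+s}b$ displayed above then show that $\sigma_{r,s}$ restricts to $\phi$ on the two chosen generators, hence on the whole subgroup. Isomorphisms between two cyclic subgroups inside $\langle a\rangle$ are handled by $\sigma_{r,0}$ (using the cyclic argument), and isomorphisms between two order-two reflection subgroups $\langle a^{i_1}b\rangle$ and $\langle a^{i_2}b\rangle$ by $\sigma_{1,i_2-i_1}$.

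For the converse, when $n\ge 4$ is even, the remark made just before the lemma already provides the obstruction: $a^{n/2}$ is central of order $2$ while $b$ has order $2$ but is not central, so the isomorphism $\langle a^{n/2}\rangle\to\langle b\rangle$ sending $a^{n/2}\mapsto b$ has no extension to $\Aut(\mathrm{D}_{2n})$, and $\mathrm{D}_{2n}$ fails to be homogeneous. For $n=2$, the group $\mathrm{D}_4\cong\mathbb{Z}_2\times\mathbb{Z}_2$ has automorphism group $\mathrm{S}_3$ acting transitively on the three order-two subgroups, so homogeneity is immediate. The main bookkeeping point I anticipate is the compatibility check in the dihedral case, namely that the chosen $\sigma_{r,s}$ really restricts to $\phi$ on all of $\langle a^d,a^{i_1}b\rangle$; but as noted, this reduces to the two generators and is immediate from the explicit formulas.
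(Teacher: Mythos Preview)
Your argument is correct. The CRT lifting step for the cyclic case is standard and valid: for each prime power $p^e\Vert n$, either $p\mid n/d$ (in which case $\gcd(r',n/d)=1$ forces $p\nmid r'$ and you may take $r\equiv r'\pmod{p^e}$) or $p\nmid n/d$ (in which case take $r\equiv 1\pmod{p^e}$). For the dihedral case with $n$ odd, your observation that the rotation subgroup $\langle a^d\rangle$ is characteristic in each dihedral subgroup (being the set of elements of odd order) is exactly what makes the reduction to two generators work, and the explicit check that $\sigma_{r,s}$ matches $\phi$ on $a^d$ and $a^{i_1}b$ is clean. The case analysis is complete because when $n$ is odd every subgroup of $\mathrm{D}_{2n}$ is either contained in $\langle a\rangle$, is generated by a single reflection, or is dihedral of the form $\langle a^d,a^ib\rangle$ with $n/d>1$; no cross-type isomorphisms arise since cyclic subgroups inside $\langle a\rangle$ have odd order. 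The failure for $n\ge 4$ even via the central involution $a^{n/2}$ is precisely the obstruction the paper flags just before the lemma.

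The paper itself does not prove this lemma at all: it simply cites \cite[Lemmas~1.6 and 1.9]{Qu}. Your direct argument is self-contained and elementary, relying only on the subgroup structure of cyclic and dihedral groups and the explicit description of $\Aut(\mathrm{D}_{2n})$ already given in the paper, whereas the paper's approach outsources the result entirely. The trade-off is length versus independence: the citation is one line, but your proof makes the paper stand on its own for this foundational fact.
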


\vskip 2pt

We say two subsets $S$ and $T$ of a group $G$ are equivalent,  written as $S\equiv_G T$, if $T=S^\sigma$ for some $\sigma \in \Aut(G)$. Clearly,\[S\equiv_G T\subseteq G\setminus\{1\}\Rightarrow \Cay(G,S)\cong \Cay(G,T).\] In the special case where $G$ is homogeneous, if $S,T\subseteq H\leq G$ then $S\equiv_H T$ if and only if $S\equiv _G T$. Thus we write $S\equiv T$ in place of $S\equiv_H T$ (and $S\equiv_G T$) when $G$ is homogeneous.

\vskip 2pt

By \cite[Theorem~1.1]{Ala} and \cite[Theorem~1]{MP}, if $\Cay(\mathrm{D}_{2n},S)$ is a connected arc-transitive graph of valency $3$ then, up to isomorphism of graphs, $S$ may be chosen as follows:
\begin{enumerate}[{\rm (i)}]
\item  $S=\{b,ab,a^2b\}$ with $n\in\{3,4\}$, or $S=\{b,ab,a^3b\}$ with $n\in \{7,8\}$; in this case, $\Cay(\mathrm{D}_{2n},S)$ is not normal with respect to $\mathrm{D}_{2n}$; or
\item  $S=\{b,ab,a^{r+1}b\}$ with odd $n\geq 13$ and $r^2+r+1\equiv 0\pmod {n}$; in this case, $\Cay(\mathrm{D}_{2n},S)$ is normal with respect to $\mathrm{D}_{2n}$, and $\Aut(\mathrm{D}_{2n},S)\cong \mathbb{Z}_3$.
\end{enumerate}

For the above (ii), we may further require that $0<r<n/2$. In fact, if $r$ is a solution of $x^2+x+1=0$ in $\mathbb{Z}_n$, then
\[
(r^2)^2+r^2+1=(r^2+1)^2-r^2=r^2-r^2=0,
\]
implying that $r^2$ is also a solution. Moreover, it is easy to see that $\Cay(\mathrm{D}_{2n},S^{\sigma_{-r,0}})\cong \Cay(\mathrm{D}_{2n},S)$ and $S^{\sigma_{-r,0}}=\{b,ab,a^{r^2+1}b\}$. Therefore, if $r\geq n/2$, then we can choose $r^2$ in place of $r$, as $r^2=-1-r\in \{1,\ldots,(n-1)/2\}$.

In \cite{Qu}, it was proved that if $n$ is odd then $\mathrm{D}_{2n}$ is a $3$-DCI-group. Thus we have the following result.

\begin{pro}\label{cubic}
Let $n\geq 3$ be an odd integer, and let $\Gamma=\Cay(\mathrm{D}_{2n},S)$ be a connected  arc-transitive Cayley graph with $|S|=3$. Then
\begin{enumerate}[\rm (i)]
\item  $S\equiv S_0:=\{b,ab,a^{r+1}b\}$, where $0<r<{n\over 2}$, $r^2+r+1\equiv 0\pmod {n}$, and either $n\geq 13$ or $n\in \{3,7\}$;
\item $\Aut(\mathrm{D}_{2n},S_0)\geq \langle\sigma_{r,1}\rangle\cong \mathbb{Z}_3$, where the equality holds if and only if $n\ne 3$;
\item $\Gamma$ is normal with respect to $\mathrm{D}_{2n}$ if and only if $n\geq 13$; more precisely, if $n=3$ then $\Gamma\cong {\rm K}_{3,3}$, and $\Aut(\Gamma)\cong (\Sy_3\times \Sy_3){:}\Sy_2$; if $n=7$ then $\Gamma$ is isomorphic to the Heawood graph, and $\Aut(\Gamma)\cong \mathrm{PSL}_3(2).\mathbb{Z}_2$; if $n\ge 13$ then $\Aut(\Gamma)\cong \mathbb{Z}_n{:}\mathbb{Z}_6\cong \mathrm{D}_{2n}{:}\mathbb{Z}_3$.
\end{enumerate}
\end{pro}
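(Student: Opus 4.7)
The plan is to derive all three parts from the classification of connected cubic arc-transitive Cayley graphs on $\mathrm{D}_{2n}$ recalled just before the statement, together with the $3$-DCI property of $\mathrm{D}_{2n}$ for odd $n$ and Proposition~\ref{N_AUT}.

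For part (i), I would begin with the classification of \cite{Ala,MP}: up to graph isomorphism, the connecting set of a cubic connected arc-transitive Cayley graph on $\mathrm{D}_{2n}$ is one of the listed forms. Since $n$ is odd, the cases $n\in\{4,8\}$ are excluded; the cases $(n,S)=(3,\{b,ab,a^2b\})$ and $(n,S)=(7,\{b,ab,a^3b\})$ both fit the template $\{b,ab,a^{r+1}b\}$ with $r=1$ and $r=2$ respectively, and both satisfy $r^2+r+1\equiv 0\pmod n$. The remaining case gives $n\geq 13$ with $r^2+r+1\equiv 0\pmod n$ directly. The text preceding the proposition already explains how to replace $r$ by $r^2\equiv -1-r$ to force $0<r<n/2$. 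Finally, since $n$ is odd, $\mathrm{D}_{2n}$ has the $3$-DCI property by \cite{Qu}, so the graph isomorphism $\Gamma\cong\Cay(\mathrm{D}_{2n},S_0)$ can be upgraded to an equivalence $S\equiv S_0$.

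For part (ii), I would compute directly using the formula for $\sigma_{r,s}$: one checks $b^{\sigma_{r,1}}=ab$, $(ab)^{\sigma_{r,1}}=a^{r+1}b$, and $(a^{r+1}b)^{\sigma_{r,1}}=a^{r^2+r+1}b=b$, so $\sigma_{r,1}$ cycles $S_0$. Using~(\ref{00}) together with $r^2\equiv -r-1$, one gets $r^3\equiv 1$, and a routine check of orders via~(\ref{00}) rules out order $1$ or $2$ unless $n$ forces degeneracies, confirming $\langle\sigma_{r,1}\rangle\cong\mathbb{Z}_3$. For the equality, I pass to the stabilizer of $b$ in $\Aut(\mathrm{D}_{2n},S_0)$. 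Any such stabilizing automorphism has the form $\sigma_{r',0}$; the equation $S_0^{\sigma_{r',0}}=S_0$ splits into two subcases, the nontrivial one yielding $r'\equiv r+1$ with $(r+1)^2\equiv 1\pmod n$. Subtracting from $r^2+r+1\equiv 0$ forces $r\equiv 1\pmod n$ and hence $n\mid 3$, i.e.\ $n=3$. So for $n\neq 3$ the stabilizer of $b$ is trivial and orbit--stabilizer gives $|\Aut(\mathrm{D}_{2n},S_0)|=3$; for $n=3$ the set $S_0$ is the whole coset of involutions and $\Aut(\mathrm{D}_6,S_0)=\Aut(\mathrm{D}_6)\cong\Sy_3$ has order $6>3$.

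For part (iii), when $n\geq 13$ normality is part of the quoted classification, so Proposition~\ref{N_AUT}(ii) gives $\Aut(\Gamma)=R(\mathrm{D}_{2n})\rtimes\langle\sigma_{r,1}\rangle\cong\mathrm{D}_{2n}{:}\mathbb{Z}_3$; to match the form $\mathbb{Z}_n{:}\mathbb{Z}_6$ I would exhibit an element of order $6$ acting on $\langle a\rangle$, namely $R(b)\sigma_{r,1}$ acts on $\langle a\rangle\cong\mathbb{Z}_n$ as $a\mapsto a^{-r}$, and $(-r)^6=r^6=1$ while $(-r)^3=-1$, so $\langle a\rangle\rtimes\langle R(b)\sigma_{r,1}\rangle$ already has order $6n$ and equals $\Aut(\Gamma)$. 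For $n=3$, $S_0=\{b,ab,a^2b\}$ consists of all three involutions, so $\Gamma$ has the two cosets of $\langle a\rangle$ as parts of a complete bipartite graph $\mathrm{K}_{3,3}$ with automorphism group $(\Sy_3\times\Sy_3){:}\Sy_2$ of order $72$; since $|R(\mathrm{D}_6)\Aut(\mathrm{D}_6,S_0)|=6\cdot 6=36$, normality fails. For $n=7$, $\Gamma$ is a connected bipartite cubic graph on $14$ vertices; a short girth computation (using $r=2$, so $(a^3b)(ab)(b)(a^4)^{-1}\ldots$ style relations forbid $4$-cycles) identifies $\Gamma$ with the Heawood graph, whose automorphism group is $\mathrm{PGL}_2(7)\cong\mathrm{PSL}_3(2).\mathbb{Z}_2$ of order $336\neq 42$, again killing normality. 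The main obstacle I expect is the clean identification of the $n=7$ graph with the Heawood graph and the explicit matching of $\mathrm{D}_{2n}{:}\mathbb{Z}_3$ with $\mathbb{Z}_n{:}\mathbb{Z}_6$; everything else is direct verification using~(\ref{00}) and orbit--stabilizer.
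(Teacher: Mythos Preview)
Your proposal is correct. Parts (i) and (iii) follow the paper's line exactly: both simply read off the relevant facts from the classification in \cite{Ala,MP} together with the $3$-DCI property of $\mathrm{D}_{2n}$ from \cite{Qu}, and for (iii) the paper says nothing more than ``a corollary of \cite[Theorem~1.1]{Ala} and \cite[Theorem~1]{MP}''.

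For part (ii), however, you take a genuinely different and somewhat cleaner route. The paper first observes that $\Aut(\mathrm{D}_{2n},S_0)$ acts faithfully on $S_0$ and hence embeds in $\Sy_3$; then for $n\ge 13$ it invokes normality of $\Gamma$ (i.e.\ part (iii)) to force the stabilizer to have order~$3$, while for $n=7$ it checks by hand that no involution $\sigma_{6,s}$ fixes $S_0$. Your argument instead computes the point stabilizer of $b$ inside $\Aut(\mathrm{D}_{2n},S_0)$ directly: any such automorphism is $\sigma_{r',0}$, and the non-trivial subcase forces $(r+1)^2\equiv 1$, which combined with $r^2+r+1\equiv 0$ gives $r\equiv 1$ and $n\mid 3$. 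Orbit--stabilizer then finishes. This is self-contained, handles $n=7$ and $n\ge 13$ uniformly, and avoids the forward reference to (iii) that the paper's proof makes; the paper's approach, on the other hand, is shorter to write once one is willing to lean on the cited classification for the size of the full automorphism group.
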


\begin{proof}
The conclusion (i) follows from the foregoing argument, and the conclusion (iii) is a corollary of \cite[Theorem~1.1]{Ala} and \cite[Theorem~1]{MP}. Now it remains to show that (ii) holds. Since $\Gamma$ is connected, $\Aut(\mathrm{D}_{2n},S_0)$ acts faithfully on $S_0$, and so $\Aut(\mathrm{D}_{2n},S_0)\lesssim\Sy_3$. It is easy to check that $\sigma_{r,1}\in\Aut(\mathrm{D}_{2n},S_0)$ and $\sigma_{r,1}$ has order $3$. Thus, if $n\geq 11$ then $\Aut(\mathrm{D}_{2n},S_0)=\langle\sigma_{r,1}\rangle$ as $\Gamma$ is normal with respect to $\mathrm{D}_{2n}$. For $n=3$, it is easy to check that $\Aut(\mathrm{D}_{2n},S_0)$ contains $\sigma_{2,0}$, which has order $2$, and so $\Aut(\mathrm{D}_{2n},S_0)=\langle\sigma_{r,1},\sigma_{2,0}\rangle\cong \Sy_3$.

Now let $n=7$. Then $r=2$. By \eqref{00}, each element in $\Aut(\mathrm{D}_{2n})$ of order $2$ has the form of $\sigma_{6,s}$, where $0\leq s\leq 6$. Now $S_0^{\sigma_{6,s}}=\{a^sb, a^{6+s}b, a^{18+s}b\}=\{a^sb,a^{6+s}b,a^{4+s}b, \}\ne \{b,ab,a^3b\}=S_0$. It follows that $\Aut(\mathrm{D}_{2n},S_0)$ contains no element of order $2$. Then $\Aut(\mathrm{D}_{2n},S_0)=\langle\sigma_{r,1}\rangle$. This completes the proof.
\end{proof}

According to \cite[Theorem~1.1]{Kova0} and \cite[Theorem~5.1]{Kova}, we obtain the following proposition about the connected tetravalent arc-transitive Cayley graphs on $\mathrm{D}_{2n}$ with $n$ odd.

\begin{pro}\label{tetravalent-arc-transitive}
Let $n\geq 3$ be an odd integer, and let $\Gamma=\Cay(\mathrm{D}_{2n},S)$ be a connected arc-transitive Cayley graph with $|S|=4$.
If $\Gamma$ is not normal with respect to $\mathrm{D}_{2n}$ then, either $n\in \{5,7,13,15\}$ or $\Gamma\cong \Cay(\mathrm{D}_{2n},\{a,a^{-1},a^2b,b\})$
\end{pro}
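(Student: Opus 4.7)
The plan is to derive the statement as a direct application of the two cited classification results, namely Theorem~1.1 of~\cite{Kova0} and Theorem~5.1 of~\cite{Kova}. Together these theorems enumerate all connected tetravalent arc-transitive Cayley graphs on dihedral groups of odd twice-order whose right regular representation sits as a non-normal subgroup of the full automorphism group, thereby providing a finite list into which $\Gamma$ must fit under the hypotheses.

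First, I would invoke both theorems and collect the complete list of pairs $(\mathrm{D}_{2n},S)$ with $n\geq 3$ odd for which $\Cay(\mathrm{D}_{2n},S)$ is connected, tetravalent, arc-transitive, and not normal with respect to $\mathrm{D}_{2n}$. Second, I would verify that every sporadic entry in this list has $n\in\{5,7,13,15\}$, which is just a matter of reading off the orders of the exceptional graphs appearing in the two classifications. Third, the remaining entries should form a single infinite family parametrised by the odd integer $n$, and I would identify this family, up to Cayley isomorphism, with $\Cay(\mathrm{D}_{2n},\{a,a^{-1},a^2b,b\})$. For this identification it suffices to check that the explicit graph $\Cay(\mathrm{D}_{2n},\{a,a^{-1},a^2b,b\})$ is connected (as $\langle a,a^2b,b\rangle=\mathrm{D}_{2n}$), tetravalent (the connection set is inverse-closed of size~$4$), and satisfies the arc-transitivity and non-normality already established in~\cite{Kova0,Kova} for the infinite family.

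The main obstacle is the bookkeeping of translating the parametric descriptions of the infinite family as given in~\cite{Kova0,Kova}---which are typically phrased in terms of congruence conditions on an integer parameter or via a coset-graph construction---into the explicit connection set $\{a,a^{-1},a^2b,b\}$. Concretely, one has to apply a suitable automorphism $\sigma_{r,s}\in \Aut(\mathrm{D}_{2n})$ to the set produced by Kovács and simplify, which is straightforward but notationally unpleasant. Once this matching is carried out, no further argument is needed, since the dichotomy in the conclusion of the proposition is exactly the dichotomy ``sporadic versus infinite family'' coming from the classification.
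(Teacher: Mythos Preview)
Your proposal is correct and matches the paper's approach exactly: the paper does not give a proof of this proposition at all but simply states it as a consequence of \cite[Theorem~1.1]{Kova0} and \cite[Theorem~5.1]{Kova}, which is precisely what you propose to unpack. The bookkeeping you describe (reading off the sporadic orders and matching the infinite family to the explicit connection set) is the only content, and the paper leaves even that implicit.
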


Let $\Gamma$ be a digraph, and let $\mathcal{B}$  be a partition of $V(\Gamma)$. The \emph{quotient digraph} $\Gamma_{\mathcal{B}}$ is defined as the digraph with vertex set $\mathcal{B}$ such that $(B_1,B_2)$ is an arc if and only if $(x,y)$ ia an arc of $\Gamma$ for some $x\in B_1$ and $y\in B_2$. For a subgroup $G\leq \Aut(\Gamma)$, if $\mathcal{B}$ is $G$-invariant partition of $V(\Gamma)$, that is, $B^g\in \mathcal{B}$ for all $g\in G$ and $B\in \mathcal{B}$, then $G$ induces an subgroup of $\Aut(\Gamma_{\mathcal{B}})$, say $G^{\mathcal{B}}$. The following fact is well-known and easily proved.

\begin{lem}\label{quotient}
Let $\Gamma$ be a digraph and $G\leq \Aut(\Gamma)$. Assume that $\mathcal{B}$ is a $G$-invariant partition of $V(\Gamma)$, and let $K$ be the kernel of~$G$ acting on $\mathcal{B}$. Then $G^{\mathcal{B}}\cong  G/K$. Moreover, if $G$ is transitively on $V(\Gamma)$ then $G^{\mathcal{B}}$ is transitive on  $\mathcal{B}$,  all $B\in \mathcal{B}$ have equal size, and the stabilizer $G_B$ acts transitively on $B$.
\end{lem}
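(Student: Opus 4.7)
The plan is to treat this as a routine orbit-and-kernel exercise for a permutation group action, carried out in two stages: first the isomorphism $G^{\mathcal{B}}\cong G/K$, then the three transitivity/regularity consequences.

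For the isomorphism, I would observe that since $\mathcal{B}$ is $G$-invariant, the assignment $g\mapsto g^{\mathcal{B}}$, where $g^{\mathcal{B}}$ denotes the permutation of $\mathcal{B}$ induced by $g$, is a well-defined group homomorphism $\varphi\colon G\to \mathrm{Sym}(\mathcal{B})$ with image $G^{\mathcal{B}}$. By the very definition of $K$, an element $g$ fixes every $B\in\mathcal{B}$ setwise if and only if $g\in K$, so $\ker\varphi=K$. The first isomorphism theorem then yields $G^{\mathcal{B}}\cong G/K$.

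For the ``moreover'' part, assume $G$ is transitive on $V(\Gamma)$. Given $B_1,B_2\in\mathcal{B}$, pick $u\in B_1$ and $v\in B_2$ and choose $g\in G$ with $u^g=v$. Since $B_1^g\in\mathcal{B}$ (by $G$-invariance of the partition) and contains $u^g=v\in B_2$, the partition property forces $B_1^g=B_2$, proving that $G^{\mathcal{B}}$ is transitive on $\mathcal{B}$. As $g$ maps $B_1$ bijectively onto $B_2$, the blocks have equal size. Finally, for a fixed $B\in\mathcal{B}$ and any $u,v\in B$, the same argument gives $g\in G$ with $u^g=v$, and now $B^g$ meets $B$ in $v$, so $B^g=B$, i.e.\ $g\in G_B$; thus $G_B$ acts transitively on $B$.

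There is no real obstacle here: the statement is essentially the standard correspondence between imprimitivity blocks and quotient actions, and each assertion reduces to one invocation of $G$-invariance of $\mathcal{B}$ together with the disjointness of the blocks in a partition. The only thing to be careful about is simply to write ``$B_1^g\in\mathcal{B}$'' as a consequence of the hypothesis, and then let the partition property do the identification $B_1^g=B_2$.
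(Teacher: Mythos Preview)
Your proposal is correct and is exactly the standard argument one would expect here. The paper itself gives no proof for this lemma, stating only that the fact is ``well-known and easily proved,'' so your write-up in fact supplies what the paper omits.
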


\vskip 20pt

\section{Connected Cayley digraphs on $\mathrm{D}_{2n}$ of valency $4$}\label{sect=Tech}
In this section, we prove the following result Theorem~\ref{connected-dihedral}, which plays a key role in the proof of Theorem~\ref{mainth1}.

\begin{thm}\label{connected-dihedral}
Let $n\geq 3$ be an odd integer. Then every connected Cayley digraph of valency $4$ on $\mathrm{D}_{2n}$ is a CI-digraph.
\end{thm}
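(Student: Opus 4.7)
The plan is to apply the Babai criterion (Proposition~\ref{CI-graph-prop}): we must show that every regular subgroup $X$ of $A := \Aut(\Gamma)$ isomorphic to $\mathrm{D}_{2n}$ is conjugate in $A$ to $R := R(\mathrm{D}_{2n})$. Writing $H = \langle a \rangle$ and $C_0 = R(H)$, Lemma~\ref{p-CI}(i) reduces this to conjugating the unique cyclic subgroup $X_1$ of order $n$ in $X$ to $C_0$ within $A$. Two immediate reductions handle most of the cases. First, since $|S| = 4$, Lemma~\ref{p-CI}(iii) already disposes of every odd $n$ whose least prime divisor exceeds $4$; combined with $n$ odd this leaves only $3 \mid n$. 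Second, if $\Gamma$ is normal with respect to $\mathrm{D}_{2n}$, Proposition~\ref{NDCI} finishes the job since $n$ is odd. Throughout the remainder I therefore assume $3 \mid n$ and $\Gamma$ is not normal.

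From here I would split on whether $S = S^{-1}$. In the graph subcase $S = S^{-1}$, I further split on whether $A$ acts arc-transitively on $\Gamma$. If $A$ is arc-transitive, Proposition~\ref{tetravalent-arc-transitive} leaves only the small values $n \in \{5, 7, 13, 15\}$ (of which only $n = 15$ survives the reduction $3 \mid n$) and the infinite family $\Gamma \cong \Cay(\mathrm{D}_{2n}, \{a, a^{-1}, a^2 b, b\})$. The case $n = 15$ is finite and can be settled by enumerating connected symmetric $4$-subsets of $\mathrm{D}_{30}$ up to $\Aut(\mathrm{D}_{30})$-equivalence. For the infinite family I would compute $A$ explicitly, show that every regular dihedral subgroup of $A$ is conjugate to $R$, and exhibit the conjugating element in each case. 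If $A$ is not arc-transitive, then $A_1$ fixes a proper nonempty subset of $S$; an orbit analysis of $A_1$ on $S$, combined with the connectedness of $\Gamma$, should force $\gcd(|A_1|, n) = 1$, so Lemma~\ref{p-CI}(ii) applies.

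For the proper digraph subcase $S \ne S^{-1}$, note that the $C_0$-orbits on $V(\Gamma)$ are exactly $\{H, Hb\}$; the induced subdigraphs $[H]$ and $[Hb]$ are Cayley digraphs of the cyclic group $H$ with connection set $S \cap H$, and the bipartite piece $[H, Hb]$ is determined by $S \cap Hb$. Any cyclic order-$n$ subgroup $X_1$ of $A$ also acts semiregularly with two orbits on $V(\Gamma)$, and I would compare its partition with $\{H, Hb\}$. When the two partitions coincide, both $X_1$ and $C_0$ stabilize the block system $\{H, Hb\}$, and Lemma~\ref{quotient} together with Proposition~\ref{4-cyclic} (using part (ii) in the residual case $9 \mid n$, where part (i) fails) yields the required conjugation. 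When the partitions differ, a block-transitivity argument produces an element of $A$ mapping one partition to the other, after which the previous subcase applies.

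\textbf{Expected main obstacle.} The hardest piece is the infinite arc-transitive family $\Cay(\mathrm{D}_{2n}, \{a, a^{-1}, a^2 b, b\})$ from Proposition~\ref{tetravalent-arc-transitive}, where the full automorphism group $A$ is strictly larger than $R\,\Aut(\mathrm{D}_{2n}, S)$; one must explicitly identify $A$ and all regular dihedral subgroups inside it. A secondary difficulty is the subcase $9 \mid n$ in the proper digraph setting, where Proposition~\ref{4-cyclic}(i) no longer applies, so the conjugation must be extracted through the graph part Proposition~\ref{4-cyclic}(ii) or by a direct structural analysis of $[H, Hb]$ relying on the connectedness hypothesis.
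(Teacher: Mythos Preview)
Your reductions via Lemma~\ref{p-CI}(iii) (to $3\mid n$) and Proposition~\ref{NDCI} (to the non-normal case) are sound, and the arc-transitive case is essentially as in the paper: the family $\{a,a^{-1},a^2b,b\}$ gives $\Gamma\cong\mathrm{C}_n[2\mathrm{K}_1]$ with $A\cong\mathbb{Z}_2^n\rtimes\mathrm{D}_{2n}$ soluble, so Hall's theorem conjugates all order-$n$ cyclic subgroups, and the small $n\in\{5,7,13,15\}$ are covered because $\mathrm{D}_{2p}$ and $\mathrm{D}_{6p}$ are CI-groups. Contrary to your assessment, this is \emph{not} the main obstacle.

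The genuine gap is the non-arc-transitive case. Your claim that orbit analysis there forces $\gcd(|A_1|,n)=1$ is false: $A_1$ can have orbits of lengths $1$ and $3$ on $S$, giving $3\mid|A_1|$ and hence $\gcd(|A_1|,n)\ne 1$ under your standing hypothesis $3\mid n$. Concretely, whenever $n=km$ with $k\in\{3,7\}$ or $k\ge 13$ and $r_0^2+r_0+1\equiv 0\pmod k$, take $S_0=\{b,a^mb,a^{m(r_0+1)}b\}$ (three involutions, so $S_0=S_0^{-1}$ and $\langle S_0\rangle$ is dihedral); then $\Cay(\mathrm{D}_{2n},S_0)$ is cubic arc-transitive, and for generic fourth element the resulting $\Gamma$ is not arc-transitive yet $A_1$ still has $S_0$ as a $3$-orbit. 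This situation occurs in both the graph case ($S\setminus S_0=\{a^sb\}$) and the proper digraph case ($S\setminus S_0=\{a^s\}$), so your graph/digraph split does not isolate it. Handling it is the real core of the proof: one passes to the block system $\mathcal{B}$ of $\langle S_0\rangle$-cosets, shows $\Gamma_{\mathcal{B}}\cong\mathrm{C}_m$ and $A=K\rtimes R(\langle a^s,b\rangle)$ with $K$ edge- but not vertex-transitive on each component $\Cay(\langle S_0\rangle,S_0)$, and then treats $k\ge 13$ (recover normality), $k=7$ (all order-$n$ cyclic subgroups conjugate inside $K\times C$ with $K\cong\mathrm{PSL}_3(2)$), and $k=3$ (direct CI computation) separately. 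Your digraph approach via Proposition~\ref{4-cyclic} also fails as written: $[H]=\Cay(H,S\cap H)$ has valency $|S\cap H|\le 3$, not $4$, and part~(ii) requires $S=S^{-1}$, contradicting the subcase hypothesis.
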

First, we construct in the following lemma certain non-normal connected CI-graph of valency $4$ on  the dihedral groups $\mathrm{D}_{2n}=\langle a,b\mid a^n=b^2=1,\,a^b=a^{-1}\rangle$.

Recall that the \emph{lexicographic product} $X[Y]$ of two digraphs is the digraph with vertex set $V(X)\times V(Y)$ such that $(x_1,y_1)$ is adjacent to $(x_2,y_2)$ if and only if either $x_1=x_2$ and $(y_1,y_2)\in Arc(Y)$, or $(x_1,x_2)\in Arc(X)$.

\begin{lem}\label{odd}
Let $n\geq 3$ be an odd integer. Then $\Cay(\mathrm{D}_{2n},\{a,a^{-1},a^2b,b\})$ is a connected CI-graph but not normal with respect to $\mathrm{D}_{2n}$.
\end{lem}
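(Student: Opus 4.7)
The plan is to identify $\Gamma := \Cay(\mathrm{D}_{2n},\{a,a^{-1},a^2b,b\})$ with the lexicographic product $\mathrm{C}_n[\overline{\mathrm{K}}_2]$ and then read off all three claims from this description. Writing $u_i := a^i$ and $w_j := a^{-j}b$, direct computation of neighbourhoods yields
\[
N(u_i)=\{u_{i-1},u_{i+1},w_i,w_{i-2}\},\qquad N(w_j)=\{w_{j-1},w_{j+1},u_j,u_{j+2}\},
\]
so $N(u_{i+1})=N(w_i)=\{u_i,u_{i+2},w_{i-1},w_{i+1}\}$ for every $i\in\mathbb{Z}_n$ while $u_{i+1}\not\sim w_i$. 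Hence the pairs $P_i:=\{u_{i+1},w_i\}$ are twin classes of $\Gamma$ and partition $V(\Gamma)$ into $n$ blocks of size $2$; sending $u_{i+1}\mapsto(i,0)$ and $w_i\mapsto(i,1)$ gives a graph isomorphism $\Gamma\cong\mathrm{C}_n[\overline{\mathrm{K}}_2]$. Connectivity is immediate since $\{a,b\}\subseteq S$ generates $\mathrm{D}_{2n}$.

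Every automorphism of $\Gamma$ must permute the twin-class partition $\{P_i\}$, so there is a homomorphism $\Aut(\Gamma)\to\Aut(\mathrm{C}_n)=\mathrm{D}_{2n}$ whose kernel is the group $T$ generated by the $n$ pairwise-disjoint twin-transpositions $\tau_i:=(u_{i+1}\ w_i)$; thus $T\cong\mathbb{Z}_2^n$. Each automorphism of $\mathrm{C}_n$ lifts through the lexicographic description, so the homomorphism is surjective and splits, giving $\Aut(\Gamma)=T\rtimes\mathrm{D}_{2n}$ of order $2^n\cdot 2n$. A direct check using~\eqref{00} shows that $\Aut(\mathrm{D}_{2n},S)=\langle\sigma_{-1,2}\rangle\cong\mathbb{Z}_2$, so by Proposition~\ref{N_AUT} the normaliser $\mathrm{N}_{\Aut(\Gamma)}(R(\mathrm{D}_{2n}))$ has order only $4n<2^n\cdot 2n$. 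Therefore $\Gamma$ is not normal with respect to $\mathrm{D}_{2n}$.

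For the CI property, by Lemma~\ref{p-CI}(i) it suffices to show that every cyclic subgroup of order $n$ in $\Aut(\Gamma)=T\rtimes\mathrm{D}_{2n}$ is conjugate to $\langle R(a)\rangle$. Writing elements as pairs $(t,h)$ and noting $\gcd(|T|,n)=1$, any element of order $n$ has the form $(t,a^r)$ with $\gcd(r,n)=1$. Expanding $(t,a^r)^n=\bigl(\sum_{k=0}^{n-1}a^{rk}\!\cdot t,\,1\bigr)$ and observing that the $n$ cyclic shifts $a^{rk}\!\cdot t$ sum coordinate-wise to $(\sum_i t_i)\mathbf{1}$ in $\mathbb{Z}_2^n$, the order-$n$ condition becomes $\sum_i t_i\equiv 0\pmod 2$. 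Conjugation by $(s,1)\in T$ sends $(t,a^r)$ to $(t+(1-a^r)\!\cdot s,\,a^r)$, and since $\gcd(r,n)=1$ the $\mathbb{F}_2$-linear map $1-a^r$ on $\mathbb{Z}_2^n$ has kernel $\{0,\mathbf{1}\}$ and image $\{v:\sum_i v_i=0\}$. Hence $(t,a^r)$ is $T$-conjugate to $(0,a^r)$, and so $\langle(t,a^r)\rangle$ is conjugate to $\langle(0,a)\rangle=\langle R(a)\rangle$; combining this with Lemma~\ref{p-CI}(i) and Proposition~\ref{CI-graph-prop} concludes that $\Gamma$ is a CI-graph. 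The main obstacle I anticipate is justifying the exact equality $\Aut(\Gamma)=T\rtimes\mathrm{D}_{2n}$ (not merely a lower bound) and carrying out the $\mathbb{F}_2$-linear-algebra computation cleanly; once these are in hand, non-normality is a simple order comparison.
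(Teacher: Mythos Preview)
Your proposal is correct and follows the same structural route as the paper: identify the twin classes, recognise $\Gamma\cong\mathrm{C}_n[\overline{\mathrm{K}}_2]$, and obtain $\Aut(\Gamma)\cong\mathbb{Z}_2^n\rtimes\mathrm{D}_{2n}$ (the paper uses the cosets of $\langle ab\rangle$, which coincide with your $P_i$ up to a shift of index). The finishing steps differ in flavour. For the CI-property the paper simply notes that $\Aut(\Gamma)$ is soluble and that $\langle R(a)\rangle$ is a Hall $2'$-subgroup, so Hall's theorem gives conjugacy of all order-$n$ subgroups in one line; you instead carry out the explicit $\mathbb{F}_2[\mathbb{Z}_n]$ computation with the operator $1-a^r$. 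For non-normality the paper exhibits the single transposition $\tau=(1\ ab)\in\Aut(\Gamma)$ and checks directly that $\tau^{-1}R(a)\tau\notin R(\mathrm{D}_{2n})$, whereas you compute $\Aut(\mathrm{D}_{2n},S)=\langle\sigma_{-1,2}\rangle$ and compare $|\mathrm{N}_{\Aut(\Gamma)}(R(\mathrm{D}_{2n}))|=4n$ with $|\Aut(\Gamma)|=2^{n+1}n$. Both endgames are valid; the paper's Hall-theorem appeal is shorter, while your arguments are more self-contained and make no use of solubility or Hall's theorem.
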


\begin{proof}
Let $S=\{a,a^{-1},a^2b,b\}$ and $\Gamma=\Cay(\mathrm{D}_{2n},S)$. It is easy to see that $S^{-1}=S$ and $\langle S\rangle=\mathrm{D}_{2n}$, and thus $\Gamma$ is a connected Cayley graph. Let $H=\langle ab\rangle$. Then $\mathrm{D}_{2n}=\sum_{i=0}^{n-1}Ha^i$ and $S=Ha \cup Ha^{n-1}$. Note that for every $i\in \{0,\ldots,n-1\}$, we get
\begin{equation}\label{01}
Ha^i=\{a^i,aba^i\} \text{ and } \Gamma(a^i)=\Gamma(aba^i)=Ha^{i-1}\cup Ha^{i+1}.
\end{equation}
Let $\mathcal{B}=\{H,Ha,\ldots,Ha^{n-1}\}$. Noting that $\Gamma$ is a vertex-transitive graph, we can deduce from~\eqref{01} that $\mathcal{B}$ is an $\Aut(\Gamma)$-invariant partition of $V(\Gamma)$. Moreover, we derive from $S=Ha \cup Ha^{n-1}$ that $[Ha^i,Ha^{i+1}]$ of $\Gamma$ by $Ha^i\cup Ha^{i+1}$ is isomorphic to $\mathrm{K}_{2,2}$, which implies that $[Ha^j,Ha^\ell]$ is either a empty graph or isomorphic to $\mathrm{K}_{2,2}$ for two arbitrary elements $j,\ell\in \{0,\ldots,n-1\}$. Thus, it is easily check that $\Gamma\cong \Gamma_{\mathcal{B}}[2\mathrm{K}_1]$,where $2\mathrm{K}_1$ is the empty graph on two vertices; in particular, $\Gamma_{\mathcal{B}}\cong \mathrm{C}_n$, the cycle of length $n$.

Now we show that $\Gamma$ is a CI-graph. Let $\tau_i=(a^i\,aba^i)$ for $0\leq i\leq n-1$, and put $K=\langle \tau_i\mid 0\leq i\leq n-1\rangle$. Then $K\cong \mathbb{Z}_2^n$. Moreover, $K$ is the kernel of $\Aut(\Gamma)$ acting on $\mathcal{B}$, and then we derive from Lemma~\ref{quotient} that
\[
\Aut(\Gamma)/K\cong \Aut(\Gamma)^{\mathcal{B}}\le \Aut(\Gamma_{\mathcal{B}})\cong \mathrm{D}_{2n}.
\]
In particular, $|\Aut(\Gamma)|\le 2n|K|=2^{n+1}n$. Since $n$ is odd and $R(\mathrm{D}_{2n})\cong \mathrm{D}_{2n}$, we conclude that   $R(\mathrm{D}_{2n})$ has no normal subgroup of order a power of $2$. This forces that $K\cap R(\mathrm{D}_{2n})=1$, and so $|\Aut(\Gamma)|\ge |KR(\mathrm{D}_{2n})|=2^n\cdot 2n$. It follows that $\Aut(\Gamma)=KR(\mathrm{D}_{2n})$; in particular, $\Aut(\Gamma)$ is soluble. Then $\langle R(a)\rangle$ is a Hall $2'$-subgroup of $\Aut(\Gamma)$, and every subgroup of $\Aut(\Gamma)$ with order $n$ is conjugate to $\langle R(a)\rangle$. Now Lemma~\ref{p-CI}\,(i) shows that every regular dihedral subgroup of $\Aut(\Gamma)$ is conjugate to $R(\mathrm{D}_{2n})$. Then Proposition~\ref{CI-graph-prop} gives that $\Gamma$ is a CI-graph, as required.

Next we show that $\Gamma$ is not normal with respect to $\mathrm{D}_{2n}$. Take $\tau=(1\,ab)\in \mathrm{Sym}(\mathrm{D}_{2n})$. We drive from \eqref{01} that $\Gamma(1)=S=\Gamma(ab)$, and so the transposition $\tau$ is an automorphism of $\Gamma$. Now, $(1,a)^{\tau^{-1}R(a)\tau}=(b,a^2)$. This implies that $\tau^{-1}R(a)\tau\notin R(\mathrm{D}_{2n})$, and so $R(\mathrm{D}_{2n})$ is not a normal subgroup of $\Aut(\Gamma)$, and the lemma follows.
\end{proof}

Recall that for an odd prime $p$, both $\mathrm{D}_{2p}$ and $\mathrm{D}_{6p}$ are CI-groups, refer to \cite{Babai,DE1}. Applying Lemma~\ref{odd} and other facts, we have the following result, which says that Theorem~\ref{connected-dihedral} holds for arc-transitive digraphs.

\begin{lem}\label{arctran}
Let $n\geq 3$ be an odd integer, and let $\Gamma=\Cay(\mathrm{D}_{2n},S)$ be of valency $4$. If $\Gamma$ is connected and arc-transitive, then $\Gamma$ is a CI-digraph.
\end{lem}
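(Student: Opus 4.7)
The plan is to first reduce the lemma to the case where $\Gamma$ is actually an undirected graph, then invoke the classification given by Proposition~\ref{tetravalent-arc-transitive} and handle the resulting short list of cases using Proposition~\ref{NDCI}, Lemma~\ref{odd}, and the classical CI-results for $\mathrm{D}_{2p}$ and $\mathrm{D}_{6p}$.

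The first step is to show that $\Gamma$ must be a graph, that is, $S=S^{-1}$. Since $\Gamma$ is connected, $\langle S\rangle=\mathrm{D}_{2n}$, so $S$ cannot be contained in $\langle a\rangle$ and must contain some reflection $s=a^ib$. As $n$ is odd, $s$ is an involution, which gives $s\in S\cap S^{-1}$ and shows that both $(1,s)$ and $(s,1)$ are arcs of $\Gamma$. Arc-transitivity then lets me transport any other arc $(u,v)$ to $(1,s)$ by some $\alpha\in \Aut(\Gamma)$; the same $\alpha$ sends the arc $(s,1)$ to $(v,u)$, forcing $(v,u)$ to be an arc as well. Hence every arc of $\Gamma$ comes with its reverse, so $S=S^{-1}$ and $\Gamma$ is a Cayley graph.

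With $\Gamma$ now known to be a graph, I split on normality. If $\Gamma$ is normal with respect to $\mathrm{D}_{2n}$, then Proposition~\ref{NDCI} (which says that $\mathrm{D}_{2n}$ is an NDCI-group for odd $n$) immediately implies that $\Gamma$ is a CI-digraph. Otherwise Proposition~\ref{tetravalent-arc-transitive} restricts the possibilities: either $\Gamma\cong \Cay(\mathrm{D}_{2n},\{a,a^{-1},a^2b,b\})$, in which case Lemma~\ref{odd} gives the conclusion directly, or $n\in\{5,7,13,15\}$. For $n\in \{5,7,13\}$ the integer $n$ is an odd prime, so $\mathrm{D}_{2n}$ is a CI-group by Babai~\cite{Babai}; and for $n=15$ we note that $\mathrm{D}_{30}=\mathrm{D}_{6\cdot 5}$ is a CI-group by~\cite{DE1}. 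In each of these four small cases $\Gamma$ is therefore a CI-graph, hence a CI-digraph.

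The genuine content has already been packaged into Proposition~\ref{tetravalent-arc-transitive}, Lemma~\ref{odd}, and Proposition~\ref{NDCI}, together with the classical CI-results for $\mathrm{D}_{2p}$ and $\mathrm{D}_{6p}$, so the lemma should reduce to little more than orchestrating these. The only essentially new ingredient is the digraph-to-graph reduction at the start; the potential pitfall I would watch for is that the arc-transitive classification in Proposition~\ref{tetravalent-arc-transitive} is stated for \emph{graphs}, so this reduction is truly needed before the proposition can be invoked.
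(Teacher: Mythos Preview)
Your proof is correct and follows essentially the same approach as the paper: reduce to the undirected case via the involution in $S$, apply Proposition~\ref{NDCI} in the normal case, and in the non-normal case invoke Proposition~\ref{tetravalent-arc-transitive} together with Lemma~\ref{odd} and the CI-results for $\mathrm{D}_{2p}$ and $\mathrm{D}_{6p}$ from \cite{Babai,DE1}. The only difference is presentational---you spell out the arc-reversal argument in slightly more detail than the paper does.
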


\begin{proof}
Assume that $\Gamma$ is  connected, i.e., $\mathrm{D}_{2n}=\langle S\rangle$. Then $S$ contains at least one element $u$ of order $2$, and thus both $(1,u)$ and $(u,1)$ are arcs of $\Gamma$. Since $\Gamma$ is arc-transitive, it follows that $\Gamma$ is a graph. If $\Gamma$ is normal with respect to $\mathrm{D}_{2n}$, then Proposition~\ref{NDCI} asserts that $\Gamma$ is a CI-graph, as required. Thus, we suppose next that  $\mathrm{D}_{2n}$ is not normal in $\Aut(\Gamma)$.

By Proposition~\ref{tetravalent-arc-transitive}, either $n\in \{5,7,13,15\}$ or $\Gamma\cong \Cay(\mathrm{D}_{2n},\{a,a^{-1},a^2b,b\})$. The latter case implies that $\Gamma$ is a CI-graph, see Lemma~\ref{odd}. Now let $n\in \{5,7,13,15\}$; in particular, $2n$ has the form of $2p$ or $6p$ for some prime $p$. In this case, $\mathrm{D}_{2n}$ is a CI-group, refer to \cite{Babai,DE1}. Then $\Gamma$ is a CI-graph. This completes the proof.
\end{proof}

In the following, we deal with those Cayley digraphs which are not arc-transitive.

\begin{lem}\label{non-arctran-dihedral}
Let $n\geq 3$ be an odd integer, and let $\Gamma=\Cay(\mathrm{D}_{2n},S)$ be  a connected Cayley digraph of valency $4$. Let $A=\Aut(\Gamma)$ and $A_1$ be the stabilizer of $1$. Suppose that $A_1$ has an orbit $S_0$ on $S$ with $|S_0|=3$ and $\langle S_0\rangle$ dihedral. Then $\Gamma$ is a CI-digraph.
\end{lem}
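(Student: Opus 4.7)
The plan is to extract a cubic arc-transitive sub-structure of $\Gamma$, normalize it using Proposition~\ref{cubic}, narrow down the remaining element $s_1$, and then apply Proposition~\ref{CI-graph-prop} via Lemma~\ref{p-CI}.

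I first observe that $S_0$ consists of three involutions of $\mathrm{D}_{2n}$. Indeed, $A_1$ is transitive on $S_0$, so all its elements share a common order; since $n$ is odd, any subset of rotations in $\mathrm{D}_{2n}$ generates a cyclic group, contradicting the dihedrality of $\langle S_0\rangle$. Put $H:=\langle S_0\rangle\cong\mathrm{D}_{2d}$ with $d$ an odd divisor of $n$. By vertex-transitivity of $R(\mathrm{D}_{2n})$, the $A$-orbits on $\Arc(\Gamma)$ are in bijection with the $A_1$-orbits on $\Gamma^+(1)=S$, so the sets of $S_0$-arcs and of $s_1$-arcs are each $A$-invariant. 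Hence $A$ acts arc-transitively on the spanning subgraph $\Sigma$ of $\Gamma$ formed by the $S_0$-arcs; the connected components of $\Sigma$ are the right cosets of $H$, each isomorphic to the connected cubic arc-transitive Cayley graph $\Cay(H,S_0)$ on $\mathrm{D}_{2d}$.

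Proposition~\ref{cubic}, applied to $\Cay(H,S_0)$, then supplies $\sigma\in\Aut(H)$ such that $S_0^\sigma=\{b,\,a^{n/d}b,\,a^{(r+1)n/d}b\}$ for some $r$ with $r^2+r+1\equiv 0\pmod d$, and forces $d\in\{3,7\}$ or $d\geq 13$. Because $n$ is odd, $\mathrm{D}_{2n}$ is homogeneous by Lemma~\ref{homogeneous}, so $\sigma$ extends to an automorphism of $\mathrm{D}_{2n}$. Replacing $\Gamma$ by a Cayley-isomorphic copy, I may therefore assume outright that $S_0=\{b,\,a^{n/d}b,\,a^{(r+1)n/d}b\}$. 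The element $s_1\in S\setminus S_0$ is $A_1$-fixed and, by connectedness of $\Gamma$, satisfies $\langle S_0,s_1\rangle=\mathrm{D}_{2n}$; a short case analysis (on whether $s_1$ is a reflection or a rotation, and on whether $s_1\in H$) pins $s_1$ down to an explicit short list.

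With $S$ thus described, it remains to control $A$ (or more modestly $A_1$) enough to invoke Lemma~\ref{p-CI}. If $|A_1|$ turns out to be coprime to $n$, Lemma~\ref{p-CI}(ii) finishes the argument; otherwise I would use Lemma~\ref{p-CI}(i) together with Proposition~\ref{CI-graph-prop}, showing that inside any regular dihedral subgroup $X\leq A$ the unique cyclic subgroup of order $n$ is $A$-conjugate to $R(\langle a\rangle)$. The main obstacle is this last step in the small-$d$ cases $d\in\{3,7\}$, where by Proposition~\ref{cubic}(iii) $\Aut(\Cay(H,S_0))$ is $(\mathrm{S}_3\times\mathrm{S}_3)\rtimes\mathrm{S}_2$ or $\mathrm{PSL}_3(2).\mathbb{Z}_2$, substantially larger than $\mathrm{D}_{2d}$ itself; $A$ may then harbor unexpected regular dihedral subgroups, and conjugating them to $R(\mathrm{D}_{2n})$ will require a careful Sylow/Hall-type analysis that leverages the oddness of $n$ to isolate a well-behaved Hall subgroup of order $n$ in $A$.
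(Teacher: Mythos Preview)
Your setup—recognizing that $S_0$ consists of three involutions, passing to the arc-transitive cubic subgraph $\Sigma=\Cay(\mathrm{D}_{2n},S_0)$, and normalizing $S_0$ via Proposition~\ref{cubic} and homogeneity—matches the paper exactly. But from there your plan is missing the two structural ingredients that actually carry the argument. First, you never invoke Proposition~\ref{NDCI}: the paper's opening reduction is that if $R(\mathrm{D}_{2n})\unlhd A$ we are done, so one assumes non-normality; this already disposes of the case $H=\mathrm{D}_{2n}$ (then $\Sigma$ is connected and non-normal, forcing $n\in\{3,7\}$ by Proposition~\ref{cubic}(iii), whence $\mathrm{D}_{2n}$ is DCI). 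Second, and more seriously, when $d<n$ the paper controls $A$ through the block system $\mathcal{B}$ of $H$-cosets: it proves $\Gamma_{\mathcal{B}}\cong\mathrm{C}_{m}$ (with $m=n/d$), that $A=K\rtimes R(\langle a^s,b\rangle)$ where $K$ is the kernel on $\mathcal{B}$, and that $K$ is edge- but not vertex-transitive on each component $\Lambda_i$. Without this you have no handle on $|A|$ or the location of cyclic subgroups of order $n$, and your proposed Sylow/Hall analysis cannot get started.

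The case split also goes differently from what you anticipate. For $d\geq 13$ the paper does \emph{not} argue via conjugacy of cyclic subgroups: it shows $K\cong\mathbb{Z}_d{:}\mathbb{Z}_3$ (from Proposition~\ref{cubic}(iii)) and then, by a short centralizer computation, that $R(\mathrm{D}_{2n})\unlhd A$, landing back on Proposition~\ref{NDCI}. For $d=3$ the paper abandons group-theoretic conjugacy entirely and instead verifies CI directly: given any $T$ with $\Cay(\mathrm{D}_{2n},T)\cong\Gamma$, it uses the $3$-DCI property to align $T_0$ with $S_0$ and then explicitly exhibits $\sigma\in\Aut(\mathrm{D}_{2n})$ with $S^\sigma\equiv T$ via elementary congruences modulo $n$. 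Only in the subcase $d=7$ with $K\cong\mathrm{PSL}_3(2)$ does the paper carry out the conjugacy-of-order-$n$-subgroups argument you sketch, and even there the key step—deducing $(7,m)=1$ and then locating every such subgroup inside $K\times C$—rests on the block-system description of $A$ that your outline lacks.
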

\begin{proof}
In view of Proposition~\ref{NDCI}, we may assume  that $R(\mathrm{D}_{2n})$ is not normal in $A$. Let $\Gamma_0=\Cay(\mathrm{D}_{2n},S_0)$. Then $A\leq \Aut(\Gamma_0)$, and so $R(\mathrm{D}_{2n})$ is not normal in $\Aut(\Gamma_0)$. In addition, $\Gamma_0$ is arc-transitive. Since $\langle S_0\rangle$ is a dihedral group, $S_0$ contains at least one involution. Then $\Gamma_0$ is an arc-transitive graph.

Let $H=\langle S_0\rangle$. If $\Gamma_0$ is connected then, since $R(\mathrm{D}_{2n})\ntrianglelefteq\Aut(\Gamma_0)$ and $n\geq 3$ is odd, Proposition~\ref{cubic} asserts that $n=3$ or $7$; in this case, $\mathrm{D}_{2n}$ is a DCI-group, and so $\Gamma$ is a CI-digraph. Thus we suppose next that $H\ne \mathrm{D}_{2n}$. Then $H\cong \mathrm{D}_{2k}$, where $k$ is a proper divisor of $n$. We have $H=\langle a^{m},b\rangle$, where $m={n\over k}\ge 3$.

Since $n$ is odd, by Lemma \ref{homogeneous},  $\mathrm{D}_{2n}$ is homogeneous. Noting  that $\Cay(H,S_0)$ is a connected arc-transitive cubic graph, it follows Proposition \ref{cubic} that there exists $\alpha\in \Aut(\mathrm{D}_{2n})$ such that $S_0^\alpha= \{b,a^mb,a^{m(r_0+1)}b\}$, where $0<r_0<{k\over 2}$, $r_0^2+r_0+1\equiv 0\pmod {k}$, and either $k\geq 13$ or $k\in \{3,7\}$. Note that $S$ is a CI-subset if and only if so is $S^\alpha$. In the following, without loss of generality, we assume that
\[
S_0= \{b,a^mb,a^{m(r_0+1)}b\}, S= \{b,a^mb,a^{m(r_0+1)}b,a^s\} \mbox{ or }\{b,a^mb,a^{m(r_0+1)}b,a^sb\}
\]
for some integer $1\leq s\leq n-1$. Since $\Gamma$ is connected, we have $\langle S\rangle =\mathrm{D}_{2n}$, and then $\langle a^m, a^s\rangle=\langle a\rangle$, which forces that $\gcd(m,s)=1$.  Recall that $k\geq 13$ or $k\in \{3,7\}$.

\medskip
\noindent{\bf Case 1:}. $k=3$.

In this case, $r_0=1$ and so $S_0=\{b,a^mb,a^{2m}b\}$.
Let $T\subseteq \mathrm{D}_{2n}$ with $\Gamma\cong \Cay(\mathrm{D}_{2n},T)$. Then $\Cay(\mathrm{D}_{2n}, T)$ has an arc-transitive subgraph $\Cay(\mathrm{D}_{2n}, T_0)$, which is isomorphic to $\Gamma_0=\Cay(\mathrm{D}_{2n}, S_0)$. Since $\mathrm{D}_{2n}$ is a $3$-DCI-group (see \cite{Qu}), there exists $\beta\in \Aut(\mathrm{D}_{2n})$ such that $T_0^\beta=S_0$ and
\[
Y:= T^\beta=\{b,a^mb,a^{2m}b,a^t\}\mbox{ or } \{b,a^mb,a^{2m}b,a^tb\},
\]
where $1\leq t\leq n-1$ with $\gcd(t,m)=1$. Noting  that $\Gamma\cong \Cay(\mathrm{D}_{2n},Y)$, it follows that $\Cay(\mathrm{D}_{2n},S\setminus S_0)\cong \Cay(\mathrm{D}_{2n},Y\setminus S_0)$. This implies that $Y\setminus S_0=\{a^t\}$  if $S\setminus S_0=\{a^s\}$, and $Y\setminus S_0=\{a^tb\}$ otherwise.

Suppose first that $S\setminus S_0=\{a^s\}$. Then $Y\setminus S_0=\{a^t\}$. Noting that $\Cay(\mathrm{D}_{2n},\{a^s\})\cong \Cay(\mathrm{D}_{2n},\{a^t\})$, it follows that $a^s$ and $a^t$ has the same order, and so $\gcd(s,n)=\gcd(t,n)$. Recalling that $\gcd(s,m)=\gcd(t,m)=1$ and $m={n\over k}={n\over 3}$, we derive that
\[
\text{ either }\gcd(s,n)=1=\gcd(t,n), \text{ or }\gcd(s,n)=3=\gcd(t,n) \text{ and }\gcd(3,m)=1.
\]

Put $s=3^i\cdot s_1$ and $t=3^i\cdot t_1$ for $i\in \{0,1\}$. Then $\gcd(s_1,n)=1=\gcd(t_1,n)$. This implies that $xs_1\equiv t_1\pmod n$ for some integer $x$ with $\gcd(x,n)=1$. It follows that $S_0^{\sigma_{x,0}}=\{b, a^{xm}b, a^{2xm}b\}$, and $(a^s)^{\sigma_{x,0}}=a^t$. Since $\gcd(3,x)=1$, we have $x\equiv\pm 1\pmod 3$. Then $S_0^{\sigma_{x,0}}=\{b, a^{m}b, a^{2m}b\}=S_0$, and $S^{\sigma_{x,0}}=Y$, which implies that $S\equiv Y\equiv T$. Therefore, $S$ is a CI-subset, that is, $\Gamma$ is a CI-digraph.

Now let $S\setminus S_0=\{a^sb\}$. We have  $Y\setminus S_0=\{a^tb\}$. Note that either $\gcd(s,n)=3^i=\gcd(t,n)$ for some $i\in \{0,1\}$, or exactly one of $s$ and $t$ is coprime to $n$. For the former case, choosing $x$ as above, we have  $S^{\sigma_{x,0}}=Y$, and so $S\equiv Y\equiv T$. Thus assume the latter case occurs, with out loss of generality, we let $\gcd(s,n)=1$ and $\gcd(t,n)=3$. Recalling that $\gcd(t,m)=1$, we have $\gcd(3,m)=1$. Since $\gcd(3,m)=1=\gcd(3,s)$, we have $s\equiv \pm 1\pmod 3$ and $m\equiv \pm 1\pmod 3$. Choose $y\in \{-1,1\}$ such that $s':=s+ym$ is divisible by $3$. Then $\gcd(s',m)=1$ and $\gcd(s',n)=3=\gcd(t,n)$. By a similar argument as above, there exists an integer $x'$ such that
$\{b,a^mb,a^{2m}b, a^{s'}b\}^{\sigma_{x',0}}=Y$. Note that
\[
S^{\sigma_{1,ym}}=\{a^{my}b, a^{m(y+1)}b, a^{m(y+2)}b, a^{s'}b\}=\{b,a^mb,a^{2m}b, a^{s'}b\}.
\]
It follows that $S\equiv \{b,a^mb,a^{2m}b, a^{s'}b\}\equiv Y\equiv T$, and so $S\equiv T$. Therefore, $S$ is a CI-subset, that is, $\Gamma$ is a CI-digraph.

\medskip
\noindent {\bf Case 2:} $k=7$ or $k\ge 13$.

Note that $\Gamma_0=\Cay(\mathrm{D}_{2n},S_0)$ has exactly $m$ connected components, say $\Lambda_i:=[\langle a^m,b\rangle a^{is}]$, $0\leq i\leq m-1$, of which each is isomorphic to the arc-transitive graph $\Cay(H,S_0)$.
Moreover, $A$ permutes these components. Put $\mathcal{B}=\{\langle a^m,b\rangle a^{is}\mid 0\leq i\leq m-1\}$, and let $K$ be the kernel of $A$ acting on $\mathcal{B}$. It follows that $R(\langle a^m\rangle)\leq K$.

\medskip
\noindent{\bf Claim I:} The quotient graph $\Gamma_{\mathcal{B}}$ is a cycle of length $m$, $A/K\cong \mathrm{D}_{2m}$, $A=K\rtimes R(\langle a^s,b\rangle)$, and $K$ is edge-transitive but not vertex-transitive on each $\Lambda_i$.

Note the $\Lambda_i$ is a connected bipartite graph with the bipartition $(\langle a^m\rangle a^{is}, \langle a^m\rangle b a^{is})$. Assume first that $S\setminus S_0=\{a^sb\}$. We have
\[
a^sb\langle a^m\rangle a^{is}=\langle a^m\rangle b a^{is-s},\,a^sb\langle a^m\rangle ba^{is}=\langle a^m\rangle a^{is+s}, \forall\, i\in \{0,\ldots,m-1\}.
\]
It follows that $\{\langle a^m,b\rangle a^{is},\langle a^m,b\rangle a^{js}\}$ is an edge of $\Gamma_{\mathcal{B}}$ if and only if $j-i\equiv\pm 1\pmod m$, and $[\langle a^m,b\rangle a^{is},\langle a^m,b\rangle a^{(i+1)s}]$ is the union of a perfect matching $[\langle a^m\rangle ba^{is},\langle a^m\rangle a^{(i+1)s}]~(\cong k\mathrm{K}_{1,1})$ and an independent set $\langle a^m\rangle a^{is}\cup \langle a^m\rangle ba^{(i+1)s}$. Thus $\Gamma_{\mathcal{B}}\cong \mathrm{C}_m$, the cycle of length $m$, and $K$ fixes each of $\langle a^m\rangle ba^{is}$ and $\langle a^m\rangle a^{is}$ set-wise.

Next assume that $S\setminus S_0=\{a^s\}$. We have
\[
a^s\langle a^m\rangle a^{is}=\langle a^m\rangle a^{is+s}, \,a^s\langle a^m\rangle ba^{is}=\langle a^m\rangle b a^{is-s}, \forall\, i\in \{0,\ldots,m-1\}.
\]
It follows that $Arc([\langle a^m,b\rangle a^{is},\langle a^m,b\rangle a^{js}])\ne \emptyset$ if and only if $j-i\equiv\pm 1\pmod m$, and the subdigraph $[\langle a^m,b\rangle a^{is},\langle a^m,b\rangle a^{(i+1)s}]$ is the union of two directed matching $[\langle a^m\rangle a^{is}, \langle a^m\rangle a^{(i+1)s}]~(\cong k\overrightarrow{\mathrm{K}}_{1,1})$ and $[\langle a^m\rangle ba^{(i+1)s}, \langle a^m\rangle ba^{is}]~(\cong k\overrightarrow{\mathrm{K}}_{1,1})$, both of which do not contain any isolated vertex.
This implies that $\Gamma_{\mathcal{B}}$ is also isomorphic to $\mathrm{C}_m$, and $K$ fixes each of $\langle a^m\rangle a^{is}$ and $\langle a^m\rangle ba^{is}$ set-wise.

The argument above shows that $A/K\lesssim\Aut(\mathrm{C}_m)\cong \mathrm{D}_{2m}$, and $K$ preserves the bipartition of each $\Lambda_i$.
Further, it is easily shown that $R(\langle a^s,b\rangle)$ acts transitively but not regularly on the vertex set of $\Gamma_{\mathcal{B}}$. Then
\[
A/K\cong \mathrm{D}_{2m} \text{ and } A=K\rtimes R(\langle a^s,b\rangle),
\]
desired as in  the claim.

For each $i$, since $|A:A_{\langle a^m,b\rangle a^{is}}|=m$, we conclude that $K$ has index $2$ in $A_{\langle a^m,b\rangle a^{is}}$. Since $\Lambda_i$ is a connected component of $\Gamma_0$ and $\Gamma_0$ is arc-transitive, $A_{\langle a^m,b\rangle a^{is}}$ acts transitively on $Arc(\Lambda_i)$, and hence $K$ acts transitively on the edge set of $\Lambda_i$. Suppose that $K$ is unfaithful on one of $\langle a^m\rangle a^{is}$ and $\langle a^m\rangle b a^{is}$. Noting that $\Lambda_i$ has valency $3$, it follows that $\Lambda_i$ is the complete bipartite graph of order $6$. This implies that $a^m$ has order $3$, and so $k=3$, which is not the case. Then the claim follows.

\medskip
\noindent{\bf Claim II:} If $K\cong \mathbb{Z}_k{:}\mathbb{Z}_3$, then $R(\mathrm{D}_{2n})\unlhd A$.

Assume that $K\cong \mathbb{Z}_k{:}\mathbb{Z}_3$. Then $R(\langle a^m\rangle)\unlhd K$ and $|A|=2m|K|=3|R(\mathrm{D}_{2n})|$. In particular, $A=R(\mathrm{D}_{2n})K_1$. It further follows from $R(\langle a^m\rangle)\unlhd R(\mathrm{D}_{2n})$ that $R(\langle a^m\rangle)\unlhd A$. Let $C$ be the centralizer of $R(\langle a^m\rangle)$ in $A$. Clearly, $R(\langle a\rangle)\leq C\unlhd A$, and so
\[
A=R(\mathrm{D}_{2n})K_1=C(K\langle R(b)\rangle)=(CK_1)\langle R(b)\rangle.
\]
Noting that $K$ is not abelian, we have $K\not\leq C$; in particular, $K_1\not\leq C$, and so $C\cap K_1=1$. Then $CK_1/C\cong K_1\cong \mathbb{Z}_3$. In addition, $R(b)\not\in C$, and so $C\langle R(b)\rangle/C\cong \mathbb{Z}_2$. Considering the action of $A$ on $R(\langle a^m\rangle)$ by conjugation, we have
\[
A/C\lesssim\Aut(R(\langle a^m\rangle))\cong \Aut(\mathbb{Z}_k).
\]
Then $A/C$ is abelian, and so
\[
A/C=(CK_1/C)(C\langle R(b)\rangle /C)\cong \mathbb{Z}_3\times \mathbb{Z}_2.
\]
This implies that $C=R(\langle a\rangle)$, and $R(\mathrm{D}_{2n})/C\unlhd A/C$. Then
$R(\mathrm{D}_{2n})\unlhd A$, as claimed.

\medskip

Now we are ready to show that $S$ is a CI-subset. Note that $k\geq 13$ or $k\in \{3,7\}$. For  $k\ge 13$, by Claim~I and (iii) of Proposition \ref{cubic}, $K\cong \mathbb{Z}_k{:}\mathbb{Z}_3$; in this case, by Claim~II,  $R(\mathrm{D}_{2n})\unlhd A$, and so $S$ is a CI-subset by Proposition \ref{NDCI}. Thus, we let $k=7$. If $K\cong \mathbb{Z}_7{:}\mathbb{Z}_3$ then $S$ is a CI-subset by Claim~II and Proposition \ref{NDCI}. Thus assume next that $K\not\cong \mathbb{Z}_7{:}\mathbb{Z}_3$.

By (iii) of Proposition \ref{cubic}, $\Aut(\Lambda_0)\cong \mathrm{PSL}_3(2){.}\mathbb{Z}_2$. Recall that $K$ is an edge-transitive but not vertex-transitive subgroup of $\Aut(\Lambda_0)$, see Claim~I. We have $K\lesssim \mathrm{PSL}_3(2)$. Checking the subgroups of $\mathrm{PSL}_3(2)$ in the Atlas \cite{Atlas}, since $|K|$ is divisible by $7$, we conclude that $K\cong \mathrm{PSL}_3(2)$. Let $C$ be the centralizer of $K$ in $A$. Then
\[
A/KC\cong (A/C)/(KC/C)\lesssim\Aut(K)/\mathrm{Inn}(K)\cong \mathbb{Z}_2.
\]
In particular, $|A:(KC)|\leq 2$. Since $R(a)$ has odd order $n$, we conclude that $R(a)\in KC$. In addition, since $|(A/K):(KC/K)|=|A:(KC)|\leq 2$ and $A/K\cong \mathrm{D}_{2m}$, we have
\[
KC/K\cong \mathbb{Z}_m \text{ or }\mathrm{D}_{2m}.
\]
Since $K$ is a nonabelian simple group, we have $K\cap C=1$, and so
\[
KC=K\times C \text{ ,and }C\cong KC/K\cong \mathbb{Z}_m \text{ or }\mathrm{D}_{2m}.
\]
It follows from $|A:(KC)|\leq 2$ and $K\cong \mathrm{PSL}_3(2)$ that either $\mathrm{PSL}_3(2)\times \mathbb{Z}_m$ or $\mathrm{PSL}_3(2)\times \mathrm{D}_{2m}$ contains a cyclic subgroup of order $7m$. The only possibility is that $(7,m)=1$.

Now let $R$ be an arbitrary cyclic subgroup of order $n$ in  $A$. Then, since $n=7m$ and $(7,m)=1$, we have $R=C_1\times C_2$ with $C_1\cong \mathbb{Z}_7$ and $C_2\cong \mathbb{Z}_m$. Note that $|KC_1:K|={|C_1|\over |K\cap C_1|}=1$ or $7$. Since $KC_1\leq A$, we have
\[
2m=|A:K|=|A:(KC_1)||KC_1:K|.
\]
This implies that $|KC_1:K|=1$, and so $C_1\leq K$. Then we can obtain
\[
R\leq \mathbf{C}_{KC}(C_1)=\mathbf{C}_{K}(C_1)\times \mathbf{C}_{C}(C_1)=C_1\times C,
\]
which implies that $C_2\leq C$. Clearly, $C$ has a unique cyclic subgroup of order $m$, and all cyclic subgroups of order $7$ in $K$ are conjugate under $K$. It follows that all cyclic subgroups of order $7m$ in $KC$ are conjugate under $KC$. Then we derive from $|A:(KC)|\leq 2$ that all cyclic subgroups of order $n$ in $A$ are contained and conjugate in $KC$. Then, by Proposition~\ref{CI-graph-prop} and (i) of Lemma \ref{p-CI}, $S$ is a CI-subset and hence $\Gamma$ is a CI-digraph. This completes the proof.
\end{proof}

In the following, we can deduce from Lemma~\ref{non-arctran-dihedral} and other facts that Theorem~\ref{connected-dihedral} holds for non-arc-transitive digraphs.

\begin{lem}\rm\label{non-arctran}
Let $n\geq 3$ be an odd integer, and let $\Gamma=\Cay(\mathrm{D}_{2n},S)$ be a connected Cayley digraph with $|S|=4$. If $\Gamma$ is not arc-transitive, then $\Gamma$ is a CI-digraph.
\end{lem}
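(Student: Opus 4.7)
The plan is to analyse the orbit structure of $A_1:=\Aut(\Gamma)_1$ on $S$ (and by duality on $S^{-1}$) and dispatch the cases to earlier lemmas. Since $\Gamma$ is not arc-transitive, $A_1$ is intransitive on $S$, so the multiset of orbit sizes is one of $(1,1,1,1),(1,1,2),(2,2),(1,3)$. If the least prime divisor of $n$ exceeds $|S|=4$ --- equivalently $3\nmid n$ --- then Lemma~\ref{p-CI}(iii) immediately gives that $\Gamma$ is a CI-digraph, so from now on I assume $3\mid n$.

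If neither the $A_1$-action on $S$ nor on $S^{-1}$ has an orbit of size $3$, I would analyse a Sylow $3$-subgroup $P$ of $A_1$. Each $P$-orbit on $S$ or on $S^{-1}$ has $3$-power size and fits inside an $A_1$-orbit of size at most $2$, so every $P$-orbit is a singleton; hence $P$ acts trivially on $S\cup S^{-1}$. The standard local-action argument for connected Cayley digraphs then forces $P=1$, so $|A_1|$ is coprime to $3$ and, since $n$ is odd, coprime to $n$ itself; Lemma~\ref{p-CI}(ii) concludes. If instead there is an orbit of size $3$ on $S^{-1}$ but not on $S$, I pass to the opposite digraph $\Cay(\mathrm{D}_{2n},S^{-1})$: it has the same automorphism group and the same CI status as $\Gamma$, and now carries an orbit of size $3$ on its defining connection set.

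It remains to handle the case where $A_1$ has an orbit $S_0\subseteq S$ with $|S_0|=3$; write $S\setminus S_0=\{s_0\}$. Since $n$ is odd, every subgroup of $\mathrm{D}_{2n}$ is cyclic or dihedral, and in particular so is $\langle S_0\rangle$. If $\langle S_0\rangle$ is dihedral, Lemma~\ref{non-arctran-dihedral} concludes directly. If $\langle S_0\rangle$ is cyclic, then $S_0\subseteq\langle a\rangle$ consists of three rotations and, by connectedness of $\Gamma$, $s_0=a^jb$ must be a reflection. In this final subcase the subdigraph $\Cay(\mathrm{D}_{2n},S_0)$ is the disjoint union of two copies of the valency-$3$ cyclic Cayley digraph, one on $\langle a\rangle$ and one on $\langle a\rangle b$, linked by the $s_0$-matching, so $\mathcal{B}=\{\langle a\rangle,\langle a\rangle b\}$ is an $A$-invariant bipartition. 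I would then imitate the quotient-plus-kernel analysis of Lemma~\ref{non-arctran-dihedral} on $\mathcal{B}$ --- with the CI-theory of $\mathbb{Z}_n$ at valency $3$ replacing the cubic dihedral classification --- and conclude via Lemma~\ref{p-CI}(i) and Proposition~\ref{CI-graph-prop} that every regular dihedral subgroup of $A$ is conjugate to $R(\mathrm{D}_{2n})$. I expect this last subcase to be the main obstacle, since Lemma~\ref{non-arctran-dihedral} does not apply and the quotient/local analysis must be done essentially by hand, tying the structure of the cyclic half $[M]$ tightly to that of the matching arcs contributed by $s_0$.
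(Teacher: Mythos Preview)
Your reduction to the case of a size-$3$ orbit $S_0\subseteq S$ is essentially correct, with one small slip: from ``$|A_1|$ coprime to $3$'' you jump to ``coprime to $n$'', which needs the same Sylow argument run for every prime $p\ge 5$ dividing $n$ (trivial, since a $p$-group with $p>4$ fixes any $4$-set pointwise). The paper reaches the same point more directly and without your case split: if $\gcd(|A_1|,n)>1$, pick $\alpha\in A_1$ of prime order $p\mid n$; by strong connectedness $\alpha$ must move some out-neighbour of a vertex it fixes, forcing $p\le 4$ and hence $p=3$, and vertex-transitivity then transports the resulting $3$-orbit back to $S=\Gamma^+(1)$. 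No passage to $S^{-1}$ or to the opposite digraph is needed.

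The genuine gap is the last subcase, $\langle S_0\rangle$ cyclic, which you leave as an unexecuted plan. Your proposed bipartition $\{\langle a\rangle,\langle a\rangle b\}$ is $A$-invariant only when $\langle S_0\rangle=\langle a\rangle$; if $\langle S_0\rangle$ is a proper subgroup of $\langle a\rangle$ then $\Cay(\mathrm{D}_{2n},S_0)$ has more than two weakly connected components and there is no evident reason $A$ should respect their grouping into two cosets of $\langle a\rangle$. Even in the good case the kernel analysis you sketch would be substantial. The paper's argument here is three lines and never analyses $\Aut(\Gamma)$ at all: for any $T$ with $\Cay(\mathrm{D}_{2n},T)\cong\Gamma$ one first checks that $|T\cap\langle a\rangle|=3$; the $3$-DCI property of $\mathrm{D}_{2n}$ then yields $\beta\in\Aut(\mathrm{D}_{2n})$ with $(S\cap\langle a\rangle)^\beta=T\cap\langle a\rangle$; finally one composes with the shift $\sigma_{1,c}$, which fixes $\langle a\rangle$ pointwise and sends any reflection to any other, to match the single remaining element. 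So the subcase you flagged as ``the main obstacle'' is in fact the easiest.
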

\begin{proof}
Let $A=\Aut(\Gamma)$, and so $A$ does not acts transitively on $Arc(\Gamma)$. Let $A_1$ be the stabilizer of $1$ in $A$. If $\gcd(|A_1|,n)=1$ then, by (ii) of Lemma~\ref{p-CI}, $\Gamma$ is a CI-digraph. Thus, we may assume that $\gcd(|A_1|,n)\neq 1$. Then $3$ divides $|A_1|$ as $n$ is odd. Let $\alpha$ be an element of order $3$ in $A_1$. By \cite[Lemma 2.6.1]{GR}, $\Gamma$ is strongly connected, and hence there exist an $m$-arc $(1=u_0,u_1,\ldots,u_m)$ such that $\alpha$ fixes $u_i$ for every $0\leq i\leq m-1$ and has a $3$-orbit on $\Gamma^+(u_m)$. Since $\Gamma$ is not arc-transitive, $A_{u_m}$ has two orbits on $\Gamma^+(u_m)$ with length $1$ and $3$, and since $A$ is vertex-transitive on $\Gamma$, $A_1$ has exactly two orbits on $S$ with length $1$ and $3$.

Let $S_0$ be the $A_1$-orbit of length $3$. Then $A$ acts transitively on $Arc(\Cay(\mathrm{D}_{2n},S_0))$. If $\langle S_0\rangle$ is dihedral, then $\Gamma$ is a CI-digraph by Lemma~\ref{non-arctran-dihedral}. Thus, we may assume that $\langle S_0\rangle$ is not dihedral, and since $\Gamma$ is  connected, we may write
\[
S_0=\{a^s,a^t,a^r\} \mbox{ and }S=\{a^s,a^t,a^r,a^ib\}.
\]
Let $T\subseteq \mathrm{D}_{2n}$ with $\Gamma\cong \Cay(\mathrm{D}_{2n},T)$. Since $A_1$ acting on $S$ has an orbit $S_0$ of length $3$ such that $\langle S_0\rangle\leq \langle a\rangle$, it is easy to check that $|T\cap\langle a\rangle|=3$. Similarly, we have
\[
T_0:=\{a^{s'},a^{t'},a^{r'}\} \mbox{ and } T=\{a^{s'},a^{t'},a^{r'},a^jb\},
\]
where $\Cay(\mathrm{D}_{2n},S_0)\cong \Cay(\mathrm{D}_{2n},T_0)$.  Since $\mathrm{D}_{2n}$ is a $3$-DCI group, $\mathrm{D}_{2n}$ has an automorphism $\beta$ such that $S_0^\beta=T_0$. It follows that $S^\beta=\{a^{s'},a^{t'},a^{r'},a^kb\}$, and  since $\sigma_{1,j-k}$ fixes $a$ and maps $a^kb$ to $a^jb$, we have $S\equiv T$ and hence $\Gamma$ is a CI-digraph, as required. This completes the proof.
\end{proof}

Finally, Theorem \ref{connected-dihedral} follows from Lemma \ref{arctran} and \ref{non-arctran}.

\vskip 20pt

\section{Proof of Theorem~\ref{mainth1}}\label{sect=Prf}

For positive integer $k$ and a vertex in a graph $\Gamma$, denote by $\Gamma_k(u)$ the set of vertices at distance $k$ from $u$. Note that $\Gamma_1(u)=\Gamma(u)$, called the \emph{neighborhood} of $u$ in $\Gamma$.

\begin{lem}\label{even}
Let $n\geq 4$ be an even  integer. Let $S=\{a,a^{-1},ab,a^{{n\over 2}+1}b\}\subseteq \mathrm{D}_{2n}$, and $\Gamma=\Cay(\mathrm{D}_{2n},S)$. Then $S$ is not a CI-subset, and if $n>4$ then $R(\mathrm{D}_{2n})\unlhd \Aut(\Gamma)$.
\end{lem}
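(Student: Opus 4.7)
The plan is to handle the two claims separately, obtaining the non-CI assertion from Proposition~\ref{CI-graph-prop} by exhibiting a regular subgroup $X\cong \mathrm{D}_{2n}$ of $\Aut(\Gamma)$ that is not conjugate to $R(\mathrm{D}_{2n})$. A preliminary step is to identify $\Aut(\mathrm{D}_{2n},S)$: analysing when $\sigma_{r,s}$ preserves $S$ forces $r=\pm1$ together with a handful of admissible $s$, yielding $\Aut(\mathrm{D}_{2n},S)=\langle\sigma_{1,n/2},\sigma_{-1,2}\rangle\cong\mathbb{Z}_2\times\mathbb{Z}_2$, of order four.

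For the normality $R(\mathrm{D}_{2n})\unlhd\Aut(\Gamma)$ with $n>4$, I will compute the size of $\Gamma(s)\cap\Gamma(s')\setminus\{1\}$ for each of the six unordered pairs $\{s,s'\}\subseteq S$. A direct calculation (relying on $n>4$) shows $\{a,a^{-1}\}$ is the unique pair with no such common neighbour, while each of the other five pairs contributes exactly one. Consequently $\Aut(\Gamma)_1$ must setwise fix $\{a,a^{-1}\}$ and hence also $\{ab,a^{n/2+1}b\}$, forcing $\Aut(\Gamma)_1\le\mathrm{Sym}\{a,a^{-1}\}\times\mathrm{Sym}\{ab,a^{n/2+1}b\}\cong\mathbb{Z}_2\times\mathbb{Z}_2$; combined with $\Aut(\mathrm{D}_{2n},S)\le\Aut(\Gamma)_1$ this gives $\Aut(\Gamma)_1=\Aut(\mathrm{D}_{2n},S)$, and Proposition~\ref{N_AUT} yields the normality.

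For the regular subgroup, set $\alpha=R(a^2b)\sigma_{-1,2}$ and $\rho=R(ab)$ in $\Aut(\Gamma)$. A short semidirect-product computation shows $\alpha^2=R(a^{-2})$ (whence $\alpha$ has order $n$), $\rho$ is an involution, $\rho\alpha\rho=R(b)\sigma_{-1,2}=\alpha^{-1}$, and $\rho\notin\langle\alpha\rangle$ (the odd powers of $\alpha$ carry the nontrivial factor $\sigma_{-1,2}$). Hence $X:=\langle\alpha,\rho\rangle\cong \mathrm{D}_{2n}$. Iterating the formulas $\alpha(a^i)=a^{-i+2}b$ and $\alpha(a^ib)=a^{-i}$ shows the $\langle\alpha\rangle$-orbit of $1$ is $\langle a^2,b\rangle$; since $\rho(1)=ab\notin\langle a^2,b\rangle$, the subgroup $X$ is transitive and therefore regular on $\mathrm{D}_{2n}$.

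Finally I need $X$ to be non-conjugate to $R(\mathrm{D}_{2n})$ in $\Aut(\Gamma)$. Any such conjugation would match the characteristic cyclic subgroups $\langle\alpha\rangle$ and $\langle R(a)\rangle$ and hence send the partition $\{\langle a^2,b\rangle,\mathrm{D}_{2n}\setminus\langle a^2,b\rangle\}$ to $\{\langle a\rangle,\langle a\rangle b\}$. For $n>4$ the normality from the preceding paragraph makes $\langle R(a)\rangle$ characteristic in the normal subgroup $R(\mathrm{D}_{2n})$, so $\Aut(\Gamma)$ stabilises $\{\langle a\rangle,\langle a\rangle b\}$; a short case check of $R(g)\beta(\langle a^2,b\rangle)$ for $\beta\in\Aut(\mathrm{D}_{2n},S)$ shows the image is always a coset of $\langle a^2,b\rangle$ or $\langle a^2,ab\rangle$, and hence never $\langle a\rangle$ or $\langle a\rangle b$. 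For $n=4$ the graph is isomorphic to $\mathrm{K}_{4,4}$ with the same partition $\{\langle a^2,b\rangle,\text{complement}\}$ as its unique bipartition, which every graph automorphism must preserve, and the same contradiction applies. The main obstacles are the common-neighbour bookkeeping and this final non-conjugacy argument; all the remaining steps are routine algebra in the semidirect product $R(\mathrm{D}_{2n})\rtimes \Aut(\mathrm{D}_{2n},S)$.
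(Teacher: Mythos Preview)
Your construction of the regular dihedral subgroup $X=\langle R(a^2b)\sigma_{-1,2},\,R(ab)\rangle$ is the same one the paper uses. The orbit--partition argument you give for non-conjugacy is correct and in one respect improves on the paper: for $n=4$ the paper simply invokes a \textsc{GAP} computation, whereas your observation that the bipartition of $\mathrm{K}_{4,4}$ is $\{\langle a^2,b\rangle,\text{complement}\}$ and can never be sent to $\{\langle a\rangle,\langle a\rangle b\}$ gives a clean computer-free proof. For $n>4$, however, you are working harder than necessary: once $R(\mathrm{D}_{2n})\unlhd\Aut(\Gamma)$ is established, its only conjugate in $\Aut(\Gamma)$ is itself, so $X\ne R(\mathrm{D}_{2n})$ (which you already know since $\sigma_{-1,2}\notin R(\mathrm{D}_{2n})$) finishes the non-CI claim immediately via the second sentence of Proposition~\ref{CI-graph-prop}. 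The coset case-check is superfluous in that range.

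There is, however, a genuine gap in your normality argument. From the common-neighbour count you correctly deduce that $\Aut(\Gamma)_1$ stabilises the partition $\{\{a,a^{-1}\},\{ab,a^{m+1}b\}\}$ of $S$, but this only bounds the \emph{image} of $\Aut(\Gamma)_1$ in $\mathrm{Sym}(S)$; it does not by itself give $\Aut(\Gamma)_1\le\mathbb{Z}_2\times\mathbb{Z}_2$. You must first show that $\Aut(\Gamma)_1$ acts faithfully on $S$, i.e.\ that the kernel $A_1^*$ of this action is trivial. The paper does this by showing $A_1^*$ fixes $\Gamma_2(1)$ pointwise and then inducting along the distance partition of the connected graph. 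Your own data in fact suffice for the same conclusion: since each of the five ``nonzero'' pairs has a \emph{single} common neighbour and these five vertices are distinct, an element of $A_1^*$ fixes all of them, and the two remaining vertices $a^2,a^{-2}$ of $\Gamma_2(1)$ are then pinned down as the unique neighbours of $a$ (resp.\ $a^{-1}$) not arising as a common neighbour. So $A_1^*$ fixes $\Gamma_2(1)$ pointwise, and connectivity gives $A_1^*=1$. You need to insert this step; without it the inequality $|\Aut(\Gamma)_1|\le 4$ is unjustified.
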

\begin{proof}
Note that $S^{-1}=S$ and $\langle S\rangle=\mathrm{D}_{2n}$. Then $\Gamma$ is a connected Cayley graph. Let $A=\Aut(\Gamma)$. Assume that $n=4$. Then $\Gamma$ is isomorphic to the complete bipartite graph of order $8$, and so $A\cong (\Sy_4\times\Sy_4){:}\Sy_2$. Computation with GAP \cite{GAP} shows that $A$ has two regular subgroups which are isomorphic to $\mathrm{D}_{8}$ but not conjugate in $A$. Then $S$ is not a CI-subset by Proposition \ref{CI-graph-prop}, and the lemma holds for $n=4$.

Assume that $n=2m>4$ in the following. Let $A_1$ be the stabilizer $1$ in $A$. Then
\begin{equation}\label{2-2}
\mathbb{Z}_2^2\cong\langle \sigma_{1,m},\sigma_{-1,2} \rangle\leq\Aut(\mathrm{D}_{2n},S)\leq A_1.
\end{equation}
Clearly, $\Gamma_2(1)=\{a^2,b,a^{m}b,a^{m},a^2b,a^{m+2}b,a^{-2}\}$. Since $n=2m>4$, we have $|\Gamma_2(1)|=7$. The induced subgraph $[\Gamma_0(1)\cup \Gamma_1(1)\cup \Gamma_2(1)]$ can be depicted as Figure~1.

\begin{figure}[htb]
\begin{center}
\begin{picture}(260,100)(-10,40)
\thicklines

\put(120,140){\circle*{5}}
\put(0,100){\circle*{5}}
\put(60,100){\circle*{5}}
\put(180,100){\circle*{5}}
\put(240,100){\circle*{5}}

\put(0,70){\circle*{5}}
\put(30,70){\circle*{5}}
\put(90,70){\circle*{5}}
\put(120,70){\circle*{5}}
\put(150,70){\circle*{5}}
\put(210,70){\circle*{5}}
\put(240,70){\circle*{5}}

\qbezier(120,140)(0,120)(0,100)
\qbezier(120,140)(60,120)(60,100)
\qbezier(120,140)(180,120)(180,100)
\qbezier(120,140)(240,120)(240,100)

\put(0,100){\line(0,-1){30}}
\put(0,100){\line(1,-1){30}}
\put(0,100){\line(3,-1){90}}
\put(60,100){\line(-1,-1){30}}
\put(60,100){\line(3,-1){90}}
\put(60,100){\line(2,-1){60}}
\put(180,100){\line(1,-1){30}}
\put(180,100){\line(-3,-1){90}}
\put(180,100){\line(-2,-1){60}}
\put(240,100){\line(-3,-1){90}}
\put(240,100){\line(-1,-1){30}}
\put(240,100){\line(0,-1){30}}

\put(117,143){${\small 1}$}
\put(-10,99){${\small a}$}
\put(44,99){${\small ab}$}
\put(144,99){${\small a^{m+1}b}$}
\put(243,99){${\small a^{-1}}$}

\put(-5,56){${\small a^2}$}
\put(27,56){${\small b}$}
\put(81,56){${\small a^{m}b}$}
\put(113,56){${\small a^{m}}$}
\put(140,56){${\small a^2b}$}
\put(194,56){${\small a^{m+2}b}$}
\put(233,56){${\small a^{-2}}$}
\put(-20,30){Figure 1. The induced subgraph $[\Gamma_0(1)\cup \Gamma_1(1)\cup \Gamma_2(1)]$ in $\Gamma$}
\end{picture}
\end{center}
\end{figure}

First, we prove that $R(\mathrm{D}_{2n})\unlhd \Aut(\Gamma)$. Let $A^*_1$ be the kernel of $A_1$ acting on $\Gamma(1)=S$. For $\Gamma_2(1)$, according to Figure~$1$, $A_1^*$ fixes $\{a^2,a^{-2}\}$ pointwise, and $A_1^*$ fixes
\[
\{b,a^{m}b\}, \{b,a^{m},a^2b\}, \{a^{m}b,a^{m},a^{m+2}b\} \text{ and }\{a^2b,a^{m+2}b\}
\]
setwise, respectively. Of course, $A_1^*$ fixes those sets obtained from the last four sets by set operations: $\cup$, $\cap$ and $\setminus$. It follows that $A_1^*$ fixes $\Gamma_2(1)$ pointwise. By the transitivity of $A$ on $V(\Gamma)$, $A_w^*$ fixes $\Gamma_2(w)$ pointwise for every $w\in V(\Gamma)$. Since $\Gamma$ is connected,  for each positive integer $k$, an easy inductive argument on $k$ gives rise to that $A_1^*$ fixes $\Gamma_k(1)$ pointwise. Thus, $A_1^*$ fixes each vertex of $\Gamma$, and so $A_1^*=1$. Then $A_1$ acts faithfully on $S$. Again by Figure~1, $A_1$ fixes $\{a,a^{-1}\}$ and $\{ab,a^{m+1}b\}$ setwise, respectively. This implies that $|A_1|\leq 4$. It follows from (\ref{2-2}) that $A_1=\langle \sigma_{1,m},\sigma_{-1,2} \rangle=\Aut(\mathrm{D}_{2n},S)$. Then, by Proposition~\ref{N_AUT}, $R(\mathrm{D}_{2n})\unlhd \Aut(\Gamma)$.

\vskip 2pt

Next, we prove that $S$ is not a CI-subset. According to Proposition~\ref{CI-graph-prop}, it suffices to shown that $A$ has a regular dihedral subgroup, which is not $R(\mathrm{D}_{2n})$. Write $\beta=\sigma_{-1,2}$. Then $\beta R(a^2b)\beta=R(b)$ and $\beta^2=1$. We get $(R(a^2b)\beta)^2=R(a^2)$, which has order $m$. Thus either $R(a^2b)\beta$ has order $m$ and $m$ is odd, or $R(a^2b)\beta$ has order $n$. For the former case, we have
\[
1=(R(a^2b)\beta)^{m}=((R(a^2b)\beta)^2)^{m-1\over 2}(R(a^2b)\beta)=R(a^2)^{m-1\over 2}(R(a^2b)\beta)
=R(a^{m+1}b)\beta\neq 1,
\]
a contradiction. Therefore, $R(a^2b)\beta$ has order $n$. Noting that $\beta R(ab){\beta}=R(ab)$, we have
\[
R(ab)R(a^2b)\beta R(ab)= R(ab)R(a^2b)R(ab)\beta=R(b)\beta.
\]
Calculation shows that $\beta R(a)\beta =R(a^{-1})$, and so
\[
(R(a^2b)\beta)^{-1}=\beta R(a^2b)=\beta R(a)R(ab)\beta^2=\beta R(a)\beta R(ab)\beta=R(a^{-1}) R(ab)\beta=R(b)\beta.
\]
It follows that  $R(ab)(R(a^2b)\beta){R(ab)}=(R(a^2b)\beta)^{-1}$. Since $R(ab)$ has order $2$, we have
\[
L:=\langle R(a^2b)\beta, R(ab)\rangle\cong \mathrm{D}_{2n}.
\]
If $L=R(\mathrm{D}_{2n})$, then $\beta\in R(\mathrm{D}_{2n})$, a contradiction. This implies that $L\ne R(\mathrm{D}_{2n})$. Noting that  $\langle a^2,ab\rangle$ is a dihedral group of order $n=2m$, we have $|R(\mathrm{D}_{2n}):\langle R(a^2), R(ab)\rangle|=2$. Since $R(a^2)=(R(a^2b)\beta)^2\in L$ and $R(ab)\in L$, we have $L\cap R(\mathrm{D}_{2n})=\langle R(a^2), R(ab)\rangle$. Clearly, $\langle R(a^2), R(ab)\rangle$ has two orbits on $\mathrm{D}_{2n}$, that is, $\langle a^2,ab\rangle$ and $b\langle a^2,ab\rangle$. Noting that $1^{R(a^2b)\beta}=(a^2b)^\beta=b\in b\langle a^2,ab\rangle$, it follows that $L$ acts transitively on $V(\Gamma)$, and so $L$ is a regular subgroup of $A$. This completes the proof.
\end{proof}
For a positive integer $m$, a group $G$ is said to be a connected $m$-(D)CI-group if every connected Cayley (di)graph of $G$ with valency at most $m$ is a CI-(di)graph. If $n$ is odd then, by  \cite[Theorem 3.5]{Qu}, $\mathrm{D}_{2n}$ is a $3$-DCI-group,
and so $\mathrm{D}_{2n}$ is a connected $4$-DCI-group by Theorem~\ref{connected-dihedral}. If $n\geq 4$ is even then $\mathrm{D}_{2n}$ is not a connected $4$-CI-group by Lemma~\ref{even}. Thus we have the following lemma.

\begin{lem}\label{connected 4-DCI} Let $n\geq 3$ be an integer.
Then $\mathrm{D}_{2n}$ is a connected $4$-DCI-group if and only if $\mathrm{D}_{2n}$ is a connected $4$-CI-group if and only if $n$ is odd.
\end{lem}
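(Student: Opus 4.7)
The plan is to observe that this lemma is a packaging of three ingredients already established earlier. Because any Cayley graph is a Cayley digraph (with $S=S^{-1}$), a connected $m$-DCI-group is automatically a connected $m$-CI-group, so the implication ``connected $4$-DCI-group $\Rightarrow$ connected $4$-CI-group'' is free. Hence the triple equivalence reduces to proving (i) if $n$ is odd then $\mathrm{D}_{2n}$ is a connected $4$-DCI-group, and (ii) if $n\geq 4$ is even then $\mathrm{D}_{2n}$ fails to be a connected $4$-CI-group.

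For (i), suppose $n$ is odd. I would first invoke Qu's result \cite[Theorem 3.5]{Qu}, cited already in the introduction, which says that $\mathrm{D}_{2n}$ is a $3$-DCI-group; this covers all Cayley digraphs (connected or not) of valency at most $3$. For the remaining valency $k=4$, Theorem~\ref{connected-dihedral} shows that every connected Cayley digraph of valency $4$ on $\mathrm{D}_{2n}$ is a CI-digraph. Combining these two facts over $k\in\{1,2,3,4\}$ yields that every connected Cayley digraph on $\mathrm{D}_{2n}$ of valency at most $4$ is a CI-digraph, which is exactly the definition of a connected $4$-DCI-group.

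For (ii), suppose $n\geq 4$ is even, and consider the set $S=\{a,a^{-1},ab,a^{n/2+1}b\}$. Clearly $S=S^{-1}$ and $\langle S\rangle=\mathrm{D}_{2n}$, so $\Cay(\mathrm{D}_{2n},S)$ is a connected Cayley graph of valency $4$. Lemma~\ref{even} tells us that $S$ is not a CI-subset of $\mathrm{D}_{2n}$, so this connected Cayley graph of valency $4$ is not a CI-graph, and hence $\mathrm{D}_{2n}$ is not a connected $4$-CI-group. Concatenating (i), (ii) and the trivial DCI-to-CI direction gives the three equivalent statements.

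Since all the substantive work has already been carried out in Theorem~\ref{connected-dihedral} and Lemma~\ref{even}, there is no real obstacle left at this stage; the only thing to be careful about is the $n=2$ case, which is excluded by the hypothesis $n\geq 3$ (so the even case reduces immediately to $n\geq 4$, where Lemma~\ref{even} applies).
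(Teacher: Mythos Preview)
Your proposal is correct and follows essentially the same approach as the paper: for odd $n$ you combine Qu's $3$-DCI result with Theorem~\ref{connected-dihedral}, and for even $n\geq 4$ you appeal to Lemma~\ref{even}, with the trivial implication connected $4$-DCI $\Rightarrow$ connected $4$-CI closing the loop. The paper's proof is merely a terser version of exactly this argument.
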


\smallskip
Now we are ready to prove Theorem~\ref{mainth1}.

\begin{proof}[Proof of Theorem~{\rm \ref{mainth1}}.]
First we prove part~(i). If $\mathrm{D}_{2n}$ has the $4$-DCI property, then Proposition~\ref{p-odd-DCI} asserts that $n$ is odd and is indivisible by $9$. Conversely, suppose that $n$ is odd and is indivisible by $9$. Let $S,\,T\subseteq \mathrm{D}_{2n}\setminus\{1\}$ with $|S|=4=|T|$ and $\Cay(\mathrm{D}_{2n},S)\cong \Cay(\mathrm{D}_{2n},T)$. Then we need to prove that $S\equiv T$. Note that  $\Cay(\mathrm{D}_{2n},S)$ and $\Cay(\mathrm{D}_{2n},T)$ have connected components $\Cay(\langle S\rangle,S)$ and $\Cay(\langle T\rangle,T)$, respectively. It follows that $\Cay(\langle S\rangle,S)\cong \Cay(\langle T\rangle,T)$. In particular, $|\langle S\rangle|=|\langle T\rangle|$. If $|\langle S\rangle|$ is odd then $\langle S\rangle$ and $\langle T\rangle$ are both cyclic and contained in $\langle a\rangle$, and thus $\langle S\rangle=\langle T\rangle$; in this case, we have $S\equiv T$ by (i) of Proposition \ref{4-cyclic} and Lemma \ref{homogeneous}, where the condition $9\nmid n$ applies (this is the only place to use the condition, and under this condition, the digraph is not a graph).
Suppose that $|\langle S\rangle|$ is even. Then we obtain that $\langle S\rangle=\langle a^s,a^tb\rangle$ and $\langle T\rangle=\langle a^s,a^rb\rangle$ for some integers $s$, $t$ and $r$. Let $\alpha=\sigma_{1,r-t}\in \Aut(\mathrm{D}_{2n})$. Then $\langle S^\alpha\rangle=\langle T\rangle$. Note that
\[
\Cay(\langle T\rangle,S^\alpha)=\Cay(\langle S^\alpha \rangle,S^\alpha)\cong \Cay(\langle S\rangle,S)\cong \Cay(\langle T\rangle,T).
\]
By Lemma \ref{connected 4-DCI} and Lemma \ref{homogeneous}, we have $S^\alpha\equiv T$, and so $S\equiv T$. Therefore, Theorem~\ref{mainth1}~(i) holds.

For Theorem~\ref{mainth1}~(ii), the necessity follows from Lemma~\ref{even}. Applying Proposition~\ref{4-cyclic}(ii), Lemma~\ref{connected 4-DCI} and Lemma~\ref{homogeneous}, we derive the sufficiency using a similar argument to the above proof of the sufficiency of Theorem~\ref{mainth1}~(i).
\end{proof}
\medskip

\noindent {\bf Acknowledgements:} We thank Yifan Pei for reading the first draft of this paper and making valuable comments. The work was supported by the National Natural Science Foundation of China (12331013, 12311530692, 12271024, 12161141005) and the 111 Project of China (B16002).
\bigskip

\end{document}